\newtheorem{theorem}{Theorem}[section]
\newtheorem{lemma}[theorem]{Lemma}
\newtheorem{proposition}[theorem]{Proposition}
\newtheorem{corollary}[theorem]{Corollary}
\theoremstyle{definition}
\newtheorem{definition}[theorem]{Definition}
\newtheorem{remark}[theorem]{Remark}
\numberwithin{equation}{section}
\begin{document}

\title[A family of left-invariant SKT metrics $\dots$]{A family of left-invariant SKT metrics on the exceptional Lie group $G_2$}

\author{David N. Pham}
\address{Department of Mathematics $\&$ Computer Science, QCC CUNY, Bayside, NY 11364}
\curraddr{}
\email{dpham90@gmail.com}
\thanks{The author wishes to thank G. Barbaro for helpful discussions.}

\subjclass[2020]{53C15,32Q60}

\keywords{SKT metrics, pluriclosed metrics, $G_2$}

\dedicatory{}

\begin{abstract}
For a complex manifold $(M,J)$, an SKT (or pluriclosed) metric is a $J$-Hermitian metric $g$ whose fundamental form $\omega:=g(J\cdot,\cdot)$ satisfies the condition $\partial\overline{\partial}\omega=0$.  As such, an SKT metric can be regarded as a natural generalization of a K\"{a}hler metric. In this paper, the exceptional Lie group $G_2$ is equipped with a left-invariant integrable almost complex structure $\mathcal{J}$ via the Samelson construction and a 7-parameter family of $\mathcal{J}$-Hermitian metrics is constructed.  From this 7-parameter family, the members which are SKT are calculated.  The result is a 3-parameter family of left-invariant SKT metrics on $G_2$. As a special case, the aforementioned family of SKT metrics contains all bi-invariant metrics on $G_2$.  In addition, this 3-parameter family of left-invariant SKT metrics are also invariant under the right action of a certain maximal torus $T$ of $G_2$.  Conversely, it is shown that if $g$ is a left-invariant $\mathcal{J}$-Hermitian metric on $G_2$ such that $g$ is invariant under the right action of $T$ and for which $(g,\mathcal{J})$ is SKT, then $g$ must belong to this 3-parameter family of left-invariant SKT metrics.
\end{abstract}

\date{}

\maketitle

\section{Introduction}
Let $(M,J)$ be a complex manifold. A $J$-Hermitian metric $g$ is said to be strong K\"{a}hler with torsion (SKT) (or \textit{pluriclosed}) if the associated fundamental form $\omega:=g(J\cdot,\cdot)$ satisfies 
$$
\partial \overline{\partial}\omega = 0.
$$
It follows from the definition that every K\"{a}hler metric on $(M,J)$ (assuming $(M,J)$ admits K\"{a}hler metrics) is also SKT. However, the converse is not true.  The ``T" in SKT refers to the torsion of the Bismut (or Strominger) connection \cite{Bismut1989,St1986}.  For every $J$-Hermitian metric $g$, there exists a unique connection $\nabla^B$ satisfying $\nabla^BJ=0$, $\nabla^Bg=0$, and whose torsion $T^B$ is totally skew-symmetric in the sense that 
$$
c(X,Y,Z):=g(T^B(X,Y),Z)
$$
is a 3-form.  The connection $\nabla^B$ is the so-called Bismut connection.  One can show that the $3$-form $c$ and the fundamental form $\omega$ are related via
$$
c(X,Y,Z)=d\omega(JX,JY,JZ).
$$
In addition, the SKT condition $\partial\overline{\partial}\omega=0$ can be shown to be equivalent to the condition $dc=0$.  Consequently, $(M,g,J)$ is K\"{a}hler if and only if $T^B=0$.  (In this case, the Bismut connection coincides with the Levi-Civita connection.)  Said differently, the strict SKT metrics on $(M,J)$ (i.e., the ones that are non-K\"{a}hler) all have associated Bismut connections with nonzero torsion.  

Historically, SKT metrics first appeared in the context of physics (e.g., \cite{GHR1984, HS1984}) where the 3-form $c$ induced by the torsion $T^B$ (of what is now called the Bismut connection by differential geometers) is interpreted as the field strength of some potential.  On the mathematical side, there has been considerable interest in SKT metrics precisely because they represent a very natural generalization of K\"{a}hler metrics  (e.g., \cite{DP2023,FG2023, ZZ2023, EFV2012}).  Given existing results in K\"{a}hler geometry, it is only natural to consider how these results generalize in the presence of nonzero torsion. 

One source of examples of SKT metrics come from compact semi-simple Lie groups of even dimension \cite{SSTVP1988}.  The SKT structure is obtained as follows. For a compact Lie group $G$ of even dimension, let $g$ be any bi-invariant metric. (For example, if $K$ is the Killing form of $\mathfrak{g}:=\mbox{Lie}(G)$ and $\lambda>0$, then $-\lambda K$ is positive definite on $\mathfrak{g}$ and the left-invariant Riemannian metric induced by $-\lambda K$ is actually bi-invariant.) Using the Samelson construction \cite{Sam1953}, one can produce a left-invariant integrable almost complex structure $J$ which is compatible with $g$. The pair $(g,J)$ is then an SKT structure on $G$. Of course, if $g$ is only left-invariant instead of bi-invariant, then $(g,J)$ is no longer an SKT structure in general.  This fact serves as motivation to look beyond the bi-invariant case and construct new SKT metrics which are strictly left-invariant. 
 
Recently, in \cite{FG2023}, Fino and Grantcharov studied the existence of left-invariant SKT (as well as Calabi-Yau with torsion (CYT)) metrics on compact semi-simple Lie groups equipped with Samelson complex structures.  In particular, they constructed an explicit 5-parameter family of left-invariant SKT metrics on the Lie group $SO(9)$ which are also invariant under the right action of a certain maximal torus of $SO(9)$. In this paper, we carry out a similar calculation by constructing a 3-parameter family of left-invariant SKT metrics on the exceptional Lie group $G_2$ where $G_2$ is equipped with a Samelson complex structure.  The aforementioned family of left-invariant SKT metrics contains, as a special case, all bi-invariant\footnote{Let $K$ be the Killing form on $\mathfrak{g}_2:=\mbox{Lie}(G_2)$.  Since $G_2$ is a compact simple Lie group, it follows that the space of bi-invariant metrics on $G_2$ is in one-to-one correspondence with the space of (positive definite) inner products on $\mathfrak{g}_2$ of the form $-\lambda K$ for all $\lambda>0$.} metrics on $G_2$.  As was the case in \cite{FG2023}, this 3-parameter family of left-invariant SKT metrics are also invariant under the right action of a certain maximal torus $T$ of $G_2$. Conversely, it is shown that if $g$ is a left-invariant $\mathcal{J}$-Hermitian metric such that $g$ is invariant under the right action of $T$ and for which $(g,\mathcal{J})$ is SKT, then $g$ must belong to this 3-parameter family of left-invariant SKT metrics.

The rest of the paper is organized as follows.  In section \ref{secG2}, we give a brief, yet reasonably self-contained, review of the excpetional Lie group $G_2$.  In section \ref{secHermitianG2}, we review the Samelson complex structure \cite{Sam1953} for even dimensional compact Lie groups and construct a suitable Samelson complex structure $\mathcal{J}$ for $G_2$.  We then proceed to construct a 7-parameter family of left-invariant $\mathcal{J}$-Hermitian metrics on $G_2$.  Lastly, in section \ref{secG2SKT}, we give a brief review of SKT metrics (again keeping things as self-contained as possible) and then proceed to calculate which members of the 7-parameter family of $\mathcal{J}$-Hermitian metrics from section \ref{secHermitianG2} are SKT.  This results in a 3-parameter family of left-invariant SKT-metrics on $G_2$ which are also invariant under the right action of a certain maximal torus $T$ of $G_2$.  We conclude the paper with some brief comments on directions for future work. 


\section{The Lie group $G_2$ and its Lie algebra}
\label{secG2}
In this section, we give an elementary introduction to the Lie group $G_2$ and its Lie algebra $\mathfrak{g}_2$ which will be sufficient for the needs of this paper.  Ultimately, we would like to construct an explicit basis of $\mathfrak{g}_2$ as a Lie subalgebra of $\mathfrak{so}(7)$ (the Lie algebra of $7\times 7$ skew-symmetric matrices) which we will use repeatedly in later calculations.  The reader wanting a deeper account of $G_2$ and its Lie algebra $\mathfrak{g}_2$ is referred to \cite{Font2018,Kar2020}.  Some recent work related to $G_2$ is given in \cite{Bar2022} which studied the Aeppli cohomology and pluriclosed flow of compact simply-connected simple Lie groups of rank $2$.
\subsection{Basics on $G_2$}
Let $e_1,e_2,\dots,e_7$ denote the standard basis on $\mathbb{R}^7$ and let $e^1,e^2,\dots, e^7$ denote its dual basis.  Let $e^{ijk}:=e^i\wedge e^j\wedge e^k$ and define $\phi\in \wedge^3(\mathbb{R}^7)^\ast$ via
\begin{equation}
    \label{eqG23form}
    \phi:=e^{147}+e^{257}+e^{367}+e^{123}-e^{156}+e^{246}-e^{345}.
\end{equation}
Likewise, we write $e^{ij}:=e^i\wedge e^j$, $e^{ijkl}:=e^i\wedge e^j\wedge e^k\wedge e^l$ and so on.

We define $G_2$ to be the set of all linear maps $F:\mathbb{R}^7\rightarrow \mathbb{R}^7$ satisfying $F^\ast\phi=\phi$.  Its clear from the definition that if $F_1,F_2\in G_2$, then so is $F_1\circ F_2$.  The following result implies that $G_2$ is also an (abstract) group.
\begin{proposition}
    \label{propG2Invertible}
    If $F\in G_2$, then $F^{-1}\in G_2$ as well.  
\end{proposition}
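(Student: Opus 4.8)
The plan is to split the statement into two parts: first establish that any $F\in G_2$ is in fact invertible as a linear map, and then deduce $(F^{-1})^\ast\phi=\phi$ by a purely formal manipulation of pullbacks. The second part is immediate once invertibility is known, since pullback reverses composition: applying $(F^{-1})^\ast$ to the defining relation $F^\ast\phi=\phi$ gives
$$(F^{-1})^\ast\phi=(F^{-1})^\ast(F^\ast\phi)=(F\circ F^{-1})^\ast\phi=\mathrm{id}^\ast\phi=\phi,$$
so $F^{-1}\in G_2$. Thus essentially all of the content lies in proving invertibility.

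To prove that $F$ is invertible it suffices, since $F$ is an endomorphism of the finite-dimensional space $\mathbb{R}^7$, to show $\ker F=0$. Suppose $v\in\ker F$. Using $(F^\ast\phi)(v,a,b)=\phi(Fv,Fa,Fb)$ for all $a,b\in\mathbb{R}^7$, together with $Fv=0$ and $F^\ast\phi=\phi$, I would obtain
$$\phi(v,a,b)=(F^\ast\phi)(v,a,b)=\phi(Fv,Fa,Fb)=0\qquad\text{for all }a,b,$$
i.e. the interior product $\iota_v\phi$ vanishes. Hence the problem reduces to the nondegeneracy statement that the linear map $\mathbb{R}^7\to\wedge^2(\mathbb{R}^7)^\ast$, $v\mapsto\iota_v\phi$, is injective.

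This nondegeneracy is the one genuine computation and the main obstacle. I would verify it directly from the explicit formula \eqref{eqG23form} by computing the seven $2$-forms $\iota_{e_i}\phi$ and observing that each carries a distinctive basis term absent from all the others; for instance $\iota_{e_1}\phi=e^{23}+e^{47}-e^{56}$, and $e^{23}$ occurs only here because $e^{123}$ is the unique summand of $\phi$ containing both $e^2$ and $e^3$. Writing $\iota_v\phi=\sum_i v_i\,\iota_{e_i}\phi$ and reading off the coefficients of these distinctive $2$-forms (one for each index, e.g. $e^{23},e^{13},e^{12},e^{17},e^{27},e^{37},e^{14}$) then forces every $v_i=0$, giving injectivity and hence $\ker F=0$. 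With injectivity in hand, $F$ is bijective and the formal argument of the first paragraph finishes the proof. As an alternative to the term-by-term check, one could instead invoke the fact that $\phi$ induces the Euclidean inner product via $(v,w)\mapsto(\iota_v\phi)\wedge(\iota_w\phi)\wedge\phi$, whose positive definiteness already forces $\iota_v\phi\neq 0$ for $v\neq 0$; but the explicit computation is more self-contained.
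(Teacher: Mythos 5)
Your proposal is correct and takes essentially the same approach as the paper: both reduce the problem to showing that a kernel vector $v\neq 0$ would contradict the explicit form of $\phi$ (your injectivity of $v\mapsto\iota_v\phi$ via ``distinctive'' basis $2$-forms is just a simultaneous packaging of the paper's case-by-case argument, which for each nonzero coefficient picks a triple such as $\phi(e_3,e_6,e_7)=1$ that only the corresponding basis direction can produce), and both then conclude $F^{-1}\in G_2$ by the formal pullback manipulation. No gaps.
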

\begin{proof}
    Let $F\in G_2$ and write $Fe_j=\sum_i F_{ij}e_i$.  Note that 
    $$
        F^\ast e^j=\sum_i F_{ji}e^i.
    $$
    Suppose that $F$ is not invertible. Then for some $c_1,\dots, c_7$ (not all zero) we have
    $$
    c_1Fe_1+c_2Fe_2+\cdots + c_7Fe_7=0.
    $$
    Consider the case where $c_7\neq 0$.  By dividing the above equation by $c_7$, we may assume that $c_7=1$ which implies that 
    $$
    Fe_7=-\sum_{i=1}^6 c_iFe_i.
    $$
    Since $F\in G_2$, we have 
    \begin{align*}
        1&=\phi(e_3,e_6,e_7)=\phi(Fe_3,Fe_6,Fe_7)=-\sum_{i=1}^6c_i\phi(Fe_3,Fe_6,Fe_i)\\
        &=-\sum_{i=1}^6c_i\phi(e_3,e_6,e_i)=-\sum_{i=1}^6c_i\cdot 0=0,
    \end{align*}
    which is is a contradiction (where we note that the fifth equality follows from the definition of $\phi$).  Hence, $F^{-1}$ exists. It then follows immediately from the definition of $G_2$ that $F^{-1}$ also belongs to $G_2$. A similar argument applies to the case where $c_1\neq 0$, $c_2\neq 0$, $\dots$, and $c_6\neq 0$.
\end{proof}
Proposition \ref{propG2Invertible} shows that $G_2\subset GL(\mathbb{R}^7)$. In other words, $G_2$ consists of all $F\in GL(\mathbb{R}^7)$ satisfying $F^\ast\phi=\phi$.  This implies that $G_2$ is a closed subset of $GL(\mathbb{R}^7)$. Since $G_2$ is also a subgroup of $GL(\mathbb{R}^7)$ (at least in the abstract sense), the closed subgroup theorem for Lie groups (cf Theorem 20.10 of \cite{Lee2003}) implies $G_2$ is an embedded Lie subgroup of $GL(\mathbb{R}^7)$.

Until stated otherwise, we will identity a linear map $F:\mathbb{R}^7\rightarrow \mathbb{R}^7$ with its matrix representation with respect to the standard basis $e_1,\dots, e_7$.  Hence, we have an isomorphism $GL(\mathbb{R}^7)\simeq GL(7,\mathbb{R})$ (the Lie group of invertible $7\times 7$ real matrices).  Consequently, we use $GL(\mathbb{R}^7)$ and $GL(7,\mathbb{R})$ interchangeably.  

We will now show that $G_2$ is a connected Lie subgroup of $SO(7)$ (the Lie group of $7\times 7$ orthogonal matrices with determinant $1$).  The statement that $G_2$ is connected is easy to see.  Indeed, let $F\in G_2$ and let 
$$
F_t:=(1-t)F+t\mathbf{1},~t\in \mathbb{R},
$$
where $\mathbf{1}:\mathbb{R}^7\rightarrow \mathbb{R}^7$ is the identity map.  Then
$$
F_t^\ast\phi=(1-t)F^\ast\phi+t\mathbf{1}^\ast\phi=(1-t)\phi+t\phi=\phi.
$$
Hence, $F_t$ is a (smooth) curve in $G_2$ from $F$ to $\mathbf{1}$.  

To show that $G_2\subset SO(7)$, first consider the expression
$$
(\iota_u\phi)\wedge (\iota_v\phi)\wedge \phi\in \wedge^7(\mathbb{R}^7)^\ast,
$$
where $u,v\in \mathbb{R}^7$ and $\iota_u\phi:=\phi(u,\cdot,\cdot)$ is the interior product.  By direct calculation, one verifies that 
\begin{equation}
\label{eqPhiIdentity1}
\iota_{e_i}\phi \wedge \iota_{e_j}\phi\wedge \phi =0,~\forall~i\neq j.
\end{equation}
For example, for $i=2$ and $j=5$, one has
$$
\iota_{e_2}\phi=e^{57}-e^{13}+e^{46}
$$
$$
\iota_{e_5}\phi=-e^{27}+e^{16}-e^{34},
$$
Then
\begin{align*}
\nonumber
\iota_{e_2}\phi\wedge \iota_{e_5}\phi&=e^{5716}-e^{5734}+e^{1327}-e^{4627}.
\end{align*}
Now if $e^{ijk}$ is any term in $\phi$, one has
\begin{align*}
    e^{5716}\wedge e^{ijk}&=0,\\
    e^{5734}\wedge e^{ijk}&=0,\\
    e^{1327}\wedge e^{ijk}&=0,\\
    e^{4627}\wedge e^{ijk}&=0,
\end{align*}
which implies $\iota_{e_2}\phi\wedge \iota_{e_5}\phi\wedge \phi =0$.  In a similar fashion, one verifies that 
\begin{equation}
\label{eqPhiIdentity2}
\iota_{e_i}\phi \wedge \iota_{e_i}\phi \wedge \phi = -6\mu_0,~\forall~i
\end{equation}
where $\mu_0:=e^1\wedge e^2\wedge \cdots \wedge e^7$.  From (\ref{eqPhiIdentity1}) and (\ref{eqPhiIdentity2}), we have
\begin{equation}
    \label{eqPhiIdentity3}
    \iota_{u}\phi \wedge \iota_{v}\phi \wedge \phi = -6\langle u,v\rangle \mu_0,~\forall~u,v\in \mathbb{R}^7,
\end{equation}
where $\langle\cdot,\cdot\rangle$ is the standard inner product on $\mathbb{R}^7$.
\begin{proposition}
    \label{propG2SO7}
    For all $F\in G_2$, $\langle Fu,Fv\rangle = \langle u,v\rangle$ and $F^\ast \mu_0=\mu_0$. In other words, $G_2\subset SO(7)$.
\end{proposition}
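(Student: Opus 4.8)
The plan is to exploit the pointwise identity (\ref{eqPhiIdentity3}), which expresses both the standard inner product and the volume form $\mu_0$ purely in terms of $\phi$ and the interior product. Since an element $F\in G_2$ is defined by the single condition $F^\ast\phi=\phi$, pulling back (\ref{eqPhiIdentity3}) ought to convert the invariance of $\phi$ into invariance of the metric and of $\mu_0$ simultaneously. The one structural fact I need is the naturality of the interior product under pullback: for any $k$-form $\alpha$ and any $u\in\mathbb{R}^7$,
$$F^\ast(\iota_{Fu}\alpha)=\iota_u(F^\ast\alpha),$$
which follows immediately by evaluating both sides on $k-1$ vectors and unwinding the definitions of $F^\ast$ and $\iota$.

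With this in hand, I would apply $F^\ast$ to the identity (\ref{eqPhiIdentity3}) written with $Fu,Fv$ in place of $u,v$. Since $F^\ast$ is an algebra homomorphism on $\wedge^\bullet(\mathbb{R}^7)^\ast$, the left-hand side becomes
$$F^\ast(\iota_{Fu}\phi)\wedge F^\ast(\iota_{Fv}\phi)\wedge F^\ast\phi = \iota_u\phi\wedge\iota_v\phi\wedge\phi,$$
where I have used the naturality relation together with $F^\ast\phi=\phi$. By (\ref{eqPhiIdentity3}) this equals $-6\langle u,v\rangle\mu_0$. On the other hand, applying $F^\ast$ to the right-hand side of (\ref{eqPhiIdentity3}) for $Fu,Fv$ gives $-6\langle Fu,Fv\rangle\,F^\ast\mu_0 = -6\langle Fu,Fv\rangle\det(F)\,\mu_0$, since pullback of the top form scales by the determinant. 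Comparing the two expressions yields the scaling relation
$$\langle u,v\rangle = \det(F)\,\langle Fu,Fv\rangle\qquad\text{for all }u,v\in\mathbb{R}^7.$$

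It then remains to pin down $\det(F)$. Setting $u=v\neq 0$ and invoking Proposition \ref{propG2Invertible} (so that $Fu\neq 0$), both $\langle u,u\rangle$ and $\langle Fu,Fu\rangle$ are strictly positive, which forces $\det(F)>0$. The scaling relation says precisely that $F^\top F=\det(F)^{-1}\,\mathbf{1}$, and taking determinants gives $\det(F)^2=\det(F)^{-7}$, i.e. $\det(F)^9=1$. As $\det(F)$ is a positive real number, $\det(F)=1$. Substituting back, $\langle Fu,Fv\rangle=\langle u,v\rangle$ for all $u,v$ and $F^\ast\mu_0=\mu_0$, which is exactly the assertion $G_2\subset SO(7)$.

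The computation here is entirely routine once (\ref{eqPhiIdentity3}) and the naturality of $\iota$ are in place; the only delicate point is conceptual rather than technical. A priori the argument shows merely that $F$ rescales the standard metric by a constant, and one must run the determinant computation — together with invertibility to guarantee $\det(F)>0$ — in order to rule out any nontrivial scaling and conclude $\det(F)=1$. This is the step I would flag as the main, albeit mild, obstacle.
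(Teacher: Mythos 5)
Your proposal is correct and follows essentially the same route as the paper: both pull back the identity $\iota_u\phi\wedge\iota_v\phi\wedge\phi=-6\langle u,v\rangle\mu_0$ using naturality of the interior product and $F^\ast\phi=\phi$ to obtain the scaling relation $\langle u,v\rangle=\det(F)\langle Fu,Fv\rangle$, and then conclude $\det(F)^9=1$, hence $\det(F)=1$. The only cosmetic differences are that the paper phrases naturality as $F^\ast(\iota_u\phi)=\iota_{F^{-1}u}\phi$ and substitutes $u\mapsto Fu$ afterward, and that your appeal to positivity of $\det(F)$ is redundant, since $\det(F)^9=1$ already forces $\det(F)=1$ over the reals.
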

\begin{proof}
    Let $F\in G_2$ and apply the dual map $F^\ast$ to both sides of (\ref{eqPhiIdentity3}):
    \begin{equation}
    \label{eq1propG2SO7}
    F^\ast(\iota_{u}\phi) \wedge F^\ast(\iota_{v}\phi) \wedge F^\ast\phi = -6\langle u,v\rangle F^\ast\mu_0.
    \end{equation}
    Since $F^\ast(\iota_u\phi)=\iota_{F^{-1}u}\phi$ and $F^\ast \phi=\phi$, we have
    \begin{equation}
    \label{eq2propG2SO7}
    (\iota_{F^{-1}u}\phi) \wedge (\iota_{F^{-1}v}\phi) \wedge \phi = -6\langle u,v\rangle F^\ast\mu_0.
    \end{equation}
    Using the identity (\ref{eqPhiIdentity3}) on the left side of (\ref{eq2propG2SO7}), we see that
    \begin{equation}
    \label{eq3propG2SO7}
       -6\langle F^{-1}u,F^{-1}v\rangle\mu_0 =  -6\langle u,v\rangle F^\ast\mu_0=-6\det(F)\langle u,v\rangle\mu_0.
    \end{equation}
    Hence,
    \begin{equation}
        \label{eq4propG2SO7}
        \langle F^{-1}u,F^{-1}v\rangle=\det(F)\langle u,v\rangle.
    \end{equation}
    Replacing $u$ with $Fu$ and $v$ with $Fv$, we have
    \begin{equation}
        \label{eq5propG2SO7}
        \langle u,v\rangle=\det(F)\langle Fu,Fv\rangle.
    \end{equation}
    Identifying $F$ with its matrix representation with respect to the standard basis $e_1,e_2,\dots, e_7$, (\ref{eq5propG2SO7}) can be rewritten as
    \begin{equation}
        \label{eq6propG2SO7}
        u^Tv=\det(F)u^TF^TFv.
    \end{equation}
    This implies 
    \begin{equation}
        \label{eq7propG2SO7}
        \det(F)F^TF=1_{7},
    \end{equation}
    where $1_7$ is the $7\times 7$ identity matrix.  Taking the determinant on both sides of (\ref{eq7propG2SO7}) gives
    \begin{equation}
        \label{eq8propG2SO7}
        \det(F)^7\det(F^TF)=\det(F)^9=1.
    \end{equation}
    Hence $\det(F)=1$.  Substituting this into (\ref{eq5propG2SO7}) gives $\langle Fu,Fv\rangle =\langle u,v\rangle$. 
\end{proof}

Before concluding this section, we give the motivation for $\phi$ in (\ref{eqG23form}).  In short, $\phi$ is simply a concise way of encoding the $7$-dimensional cross product on $\mathbb{R}^7$.  Explicitly, the cross product $\times:\mathbb{R}^7\times \mathbb{R}^7\rightarrow \mathbb{R}^7$ is the bilinear map defined by
$$
\langle u\times v, w\rangle = \phi(u,v,w),~\forall~u,v,w\in \mathbb{R}^7.
$$
Since $\phi$ is a 3-form, it follows that 
$$
u\times v=-v\times u
$$
and $\langle u\times v, u\rangle =\langle u\times v, v\rangle =0$, that is, $u\times v$ is orthogonal to $u$ and $v$. One can check in a straightforward  (albeit somewhat tedious) way that
\begin{align}
    \label{eqCross1}
    |u\times v|^2&=|u|^2|v|^2-\langle u,v\rangle^2\\
    \label{eqCross2}
    u\times (u\times v)&=-|u|^2v+\langle u,v\rangle u,
\end{align}
where $|u|^2:=\langle u,u\rangle$. As a side remark, note that (\ref{eqCross2}) implies that the cross product induces an almost complex structure $J$ on the $6$-sphere as follows: for $p\in S^6$,
$$
J_p: T_pS^6\rightarrow T_pS^6,~v\mapsto p\times v.
$$
We conclude this section with the following result which expresses the definition of $G_2$ in terms of the cross product $\times$.
\begin{proposition}
    Let $F\in GL(\mathbb{R}^7)$.  Then $F\in G_2$ if and only if $F(u\times v)=(Fu)\times (Fv)$ for $u,v\in\mathbb{R}^7$.
\end{proposition}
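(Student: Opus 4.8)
The plan is to prove both implications by exploiting the defining relation $\langle u\times v,w\rangle=\phi(u,v,w)$ together with the nondegeneracy of the standard inner product. The forward direction is the easier one. Assuming $F\in G_2$, Proposition \ref{propG2SO7} already gives $F\in SO(7)$, so that $F^{-1}=F^T$ and $F$ preserves $\langle\cdot,\cdot\rangle$. I would then compute, for arbitrary $w\in\mathbb{R}^7$,
$$
\langle (Fu)\times(Fv),\,w\rangle=\phi(Fu,Fv,w)=\phi\big(Fu,Fv,F(F^{-1}w)\big)=\phi(u,v,F^{-1}w)=\langle u\times v,\,F^{-1}w\rangle=\langle F(u\times v),\,w\rangle,
$$
where the third equality uses $F^\ast\phi=\phi$ and the last uses the orthogonality of $F$. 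Since $w$ is arbitrary and $\langle\cdot,\cdot\rangle$ is nondegenerate, we conclude $(Fu)\times(Fv)=F(u\times v)$.

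For the converse, suppose $F\in GL(\mathbb{R}^7)$ satisfies $F(u\times v)=(Fu)\times(Fv)$ for all $u,v$. The key difficulty is that we do not yet know $F$ is orthogonal, so we cannot simply transport $\phi$ through $F$ as above. To remedy this I would first extract orthogonality from the double cross product identity (\ref{eqCross2}). Applying $F$ to $u\times(u\times v)=-|u|^2 v+\langle u,v\rangle u$ and using the hypothesis twice gives
$$
(Fu)\times\big((Fu)\times(Fv)\big)=-|u|^2\,Fv+\langle u,v\rangle\,Fu,
$$
while applying (\ref{eqCross2}) directly to the pair $Fu,Fv$ gives
$$
(Fu)\times\big((Fu)\times(Fv)\big)=-|Fu|^2\,Fv+\langle Fu,Fv\rangle\,Fu.
$$
Subtracting yields $(|u|^2-|Fu|^2)\,Fv+(\langle Fu,Fv\rangle-\langle u,v\rangle)\,Fu=0$. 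Whenever $u,v$ are linearly independent, $Fu$ and $Fv$ are linearly independent (as $F$ is invertible), so both scalar coefficients must vanish; this forces $|Fu|=|u|$ for every nonzero $u$ and $\langle Fu,Fv\rangle=\langle u,v\rangle$ on independent pairs. Polarization (or directly checking the dependent case $v=cu$) then upgrades this to $\langle Fu,Fv\rangle=\langle u,v\rangle$ for all $u,v$, so $F\in O(7)$.

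With orthogonality in hand the remainder is immediate: for all $u,v,w$,
$$
\phi(Fu,Fv,Fw)=\langle (Fu)\times(Fv),\,Fw\rangle=\langle F(u\times v),\,Fw\rangle=\langle u\times v,\,w\rangle=\phi(u,v,w),
$$
so $F^\ast\phi=\phi$ and hence $F\in G_2$. I expect the main obstacle to be precisely the orthogonality step in the converse: the cross-product relation by itself does not visibly encode metric preservation, and the trick is to feed it into the quadratic identity (\ref{eqCross2}) and then use linear independence to separate the two scalar conditions. Everything else reduces to nondegeneracy of $\langle\cdot,\cdot\rangle$ and the already-established inclusion $G_2\subset SO(7)$.
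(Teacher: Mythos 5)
Your proof is correct, and while your forward direction is essentially the paper's (both rest on Proposition \ref{propG2SO7} and the defining relation $\langle u\times v,w\rangle=\phi(u,v,w)$; the paper just evaluates against $Fw$ directly instead of writing $w=F(F^{-1}w)$), your converse takes a genuinely different route. The paper extracts orthogonality from the standard basis: it uses the fact that $\phi$ makes $e_1,\dots,e_7$ closed under $\times$ up to sign, with the cyclic relations $e_i\times e_j=e_k \Rightarrow e_j\times e_k=e_i$, $e_k\times e_i=e_j$, to get $\langle Fe_i,Fe_j\rangle=0$ for $i\neq j$, and then combines the norm identity (\ref{eqCross1}) with the three cyclic equations $|Fe_i|=|Fe_j||Fe_k|$ to force $|Fe_i|=1$. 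You instead feed the hypothesis into the double cross product identity (\ref{eqCross2}), applied once before and once after $F$, and use invertibility of $F$ (linear independence of $Fu,Fv$) to separate the two scalar conditions $|Fu|=|u|$ and $\langle Fu,Fv\rangle=\langle u,v\rangle$. Your argument is basis-free and shorter: it never invokes the multiplication table of $\times$ on the standard basis (a property of the specific $\phi$ that the paper asserts but must be checked), and it would work verbatim for any bilinear product satisfying (\ref{eqCross2}) on an inner product space of dimension at least two. The trade-off is symmetric in terms of logical standing: the paper leans on the stated-but-unproved identity (\ref{eqCross1}), while you lean on (\ref{eqCross2}); the paper's version is more concrete and self-contained at the level of explicit coordinates, while yours isolates the structural mechanism by which the cross product encodes the metric.
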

\begin{proof}
    Suppose $F\in G_2$.  Since $F$ preserves the inner product $\langle\cdot,\cdot\rangle$ (by Proposition \ref{propG2SO7}), we have
    $$
    \langle F(u\times v),Fw\rangle=\langle u\times v,w\rangle=\phi(u,v,w)
    $$
    for all $u,v,w\in \mathbb{R}^7$.  On the other hand,
    $$
    \phi(Fu,Fv,Fw)=\langle (Fu)\times (Fv),Fw\rangle.
    $$
    Since $F^\ast\phi =\phi$, we have
    $$
    \langle F(u\times v),Fw\rangle=\langle (Fu)\times (Fv),Fw\rangle
    $$
    which implies $F(u\times v)=(Fu)\times (Fv)$.

    Now suppose that $F(u\times v)=(Fu)\times (Fv)$.  First, we show that $Fe_i$ and $Fe_j$ are orthogonal for $i\neq j$. From the definition of the cross product, we have $e_i\times e_j=\pm e_k$ for some $k$.  By swapping $i$ and $j$ if necessary, we may assume that $e_i\times e_j=e_k$.  The definition of the cross product implies that $e_j\times e_k=e_i$ and $e_k\times e_i = e_j$. From this, we have
    \begin{align*}
        \langle Fe_i,Fe_j\rangle &=\langle F(e_j\times e_k),Fe_j\rangle\\
        &=\langle (Fe_j)\times (Fe_k),Fe_j\rangle\\
        &=\phi(Fe_j,Fe_k,Fe_j)\\
        &=0.
    \end{align*}
    We now show that $|Fe_i|=1$.  Using (\ref{eqCross1}), we have
    \begin{align*}
        |Fe_i|^2&=\langle Fe_i, Fe_i\rangle\\
        &=\langle F(e_j\times e_k), F(e_j\times e_k)\rangle\\
        &=\langle (Fe_j)\times (Fe_k), (Fe_j)\times (Fe_k)\rangle \\
        &=|(Fe_j)\times (Fe_k)|^2\\
        &=|Fe_j|^2|Fe_k|^2-\langle Fe_j,Fe_k\rangle^2\\
        &=|Fe_j|^2|Fe_k|^2,
    \end{align*}
   where the last equality follows from the previous calculation which showed that $Fe_j$ and $Fe_k$ are orthogonal for $j\neq k$.

   Taking the square root of the last equality and using the fact that $e_i=e_j\times e_k$ holds when taking cyclic permutations of $i$,$j$, and $k$, we obtain
   \begin{align*}
       |Fe_i|&=|Fe_j||Fe_k|\\
       |Fe_j|&=|Fe_k||Fe_i|\\
       |Fe_k|&=|Fe_i||Fe_j|.
   \end{align*}
   This implies 
   $$
   |Fe_j|=|Fe_i|^2|Fe_j|
   $$
   from which we conclude that $|Fe_i|=1$.  Combining this with the previous fact that $Fe_i$ and $Fe_j$ are orthogonal for $i\neq j$ gives
   $$
    \langle Fe_i, Fe_j\rangle =\delta_{ij}.
   $$
   This in turn implies 
   \begin{equation}
   \label{eqPropG2Cross}
    \langle Fu,Fv\rangle =\langle u,v\rangle
   \end{equation}
   for all $u,v\in \mathbb{R}^7$.  Using this, we have
   \begin{align*}
       \phi(Fu,Fv,Fw)&=\langle (Fu)\times (Fv),Fw\rangle\\
       &=\langle F(u\times v),Fw\rangle \\
       &=\langle u\times v,w\rangle\\
       &=\phi(u,v,w),
   \end{align*}
   where the second to last equality follows from (\ref{eqPropG2Cross}).  Hence, $F\in G_2$.  
\end{proof}

\subsection{Calculations on $\mathfrak{g}_2:=\mbox{Lie}(G_2)$}\label{Secg2Calc} 
Let $X\in \mathfrak{gl}(7,\mathbb{R})$ and let $A(t):=\mbox{exp}(tX)$.  Then $X\in \mathfrak{g}_2$ if and only if 
$$
\frac{d}{dt}\Big|_{t=0}A(t)^\ast \phi = 0.
$$
Let $A_{ij}(t)$ denote the $(i,j)$-element of $A(t)$. Expanding $A^\ast\phi$ gives
\begin{align*}
    A^\ast\phi&=\sum_{k,l,m}\left[A_{1k}A_{4l}A_{7m}+A_{2k}A_{5l}A_{7m}+A_{3k}A_{6l}A_{7m}+A_{1k}A_{2l}A_{3m}\right]e^{klm}\\
    &+\sum_{k,l,m}\left[-A_{1k}A_{5l}A_{6m}+A_{2k}A_{4l}A_{6m}-A_{3k}A_{4l}A_{5m}\right]e^{klm}.
\end{align*}
Differentiating at $t=0$ and using the fact that $A_{ij}(0)=\delta_{ij}$ and $\frac{dA_{ij}}{dt}(0)=X_{ij}$ gives
\begin{align*}
    \frac{d}{dt}\Big|_{t=0}A^\ast \phi&=X_{1k}e^{k47}+X_{4l}e^{1l7}+X_{7m}e^{14m}\\
    &+X_{2k}e^{k57}+X_{5l}e^{2l7}+X_{7m}e^{25m}\\
    &+X_{3k}e^{k67}+X_{6l}e^{3l7}+X_{7m}e^{36m}\\
    &+X_{1k}e^{k23}+X_{2l}e^{1l3}+X_{3m}e^{12m}\\
    &-X_{1k}e^{k56}-X_{5l}e^{1l6}-X_{6m}e^{15m}\\
    &+X_{2k}e^{k46}+X_{4l}e^{2l6}+X_{6m}e^{24m}\\
    &-X_{3k}e^{k45}-X_{4l}e^{3l5}-X_{5m}e^{34m},
\end{align*}
where we have invoked the Einstein summation convention of summing over repeated indices in a term to simplify notation.

Note that $\wedge^3(\mathbb{R}^7)^\ast$ is a 35-dimensional vector space with basis
$$
\{e^{ijk}~|~1\le i<j<k\le 7\}.
$$
The condition $\frac{d}{dt}\Big|_{t=0}A(t)^\ast \phi = 0$ yields 35 equations where each equation is the coefficient of some basis element $e^{ijk}$ set equal to zero.  Seven of the equations are
\begin{align*}
&X_{11}+X_{22}+X_{33}=0,~X_{11}+X_{44}+X_{77}=0,\\
&-X_{33}-X_{44}-X_{55}=0,~-X_{11}-X_{55}-X_{66}=0,\\
&-X_{11}-X_{55}-X_{66}=0,~X_{22}+X_{44}+X_{66}=0,\\
&X_{22}+X_{55}+X_{77}=0,
\end{align*}
which imply $X_{ii}=0$ for $i=1,2,\dots, 7$.

The remaining 28 equations form seven groups of four equations.  For example, one of the seven groups of four equations is
\begin{align*}
&X_{45}+X_{21}-X_{67}=0,~X_{12}+X_{54}+X_{67}=0,\\
&X_{76}-X_{12}+X_{45}=0,~X_{76}-X_{54}+X_{21}=0.
\end{align*}
Solving these seven groups of four equations we see that $X\in \mathfrak{g}_2$ if and only if $X_{ji}=-X_{ij}$ (this condition is not surprising since $G_2\subset SO(7)$) and the following seven constraints are satisfied:
\begin{align*}
    &X_{76}=X_{54}+X_{12},~X_{61}=X_{72}-X_{34},\\
    &X_{62}=X_{53}+X_{17},~X_{14}=X_{63}+X_{52},\\
    &X_{51}=X_{42}+X_{37},~X_{64}=X_{31}-X_{75},\\
    &X_{74}=X_{23}-X_{56}.
\end{align*}
An arbitrary $7\times 7$ skew-symmetric matrix $X$ has 21 free parameters, namely, 
$$
\{X_{ij}~|~1\le i<j\le 7\}.
$$
The constraints given above reduces the number of free parameters by $7$.  Hence, 
$$
\dim \mathfrak{g}_2=21-7=14.
$$
A basis for $\mathfrak{so}(7)$ is 
$$
\{E_{ij}~|~1\le i<j\le 7\},
$$
where $E_{ij}$ is the matrix whose $(i,j)$-element is $1$, $(j,i)$-element is $-1$, and whose other elements are all zero.  From the additional seven conditions given above, we obtain a basis for $\mathfrak{g}_2$:
\begin{align}
    \label{eqG2basis1}
    &b_1=E_{12}-E_{67},~b_2=E_{13}+E_{46},~b_3=E_{17}-E_{26},\\
    \label{eqG2basis2}
    &b_4=E_{23}-E_{47},~b_5=E_{24}+E_{15},~b_6=E_{25}-E_{14},\\
    \label{eqG2basis3}
    &b_7=E_{27}+E_{16},~b_8=E_{34}+E_{16},~b_9=E_{35}+E_{26},\\
    \label{eqG2basis4}
    &b_{10}=E_{36}-E_{14},~b_{11}=E_{37}-E_{15},~b_{12}=E_{45}+E_{67},\\
    \label{eqG2basis5}
    &b_{13}=E_{56}+E_{47},~b_{14}=E_{57}-E_{46}.
\end{align}
For the convenience of the reader (and the apparent difficulty of finding such information in the literature), the nonzero bracket relations of $\mathfrak{g}_2$ with respect to this basis is given in Appendix \ref{AppendixG2Bracket}.

\section{Left-invariant Hermitian structures on $G_2$}
\label{secHermitianG2}
\subsection{Samelson Complex Structure}
It was shown by Samelson in \cite{Sam1953} (and independently by Wang in \cite{Wang1954}) that every compact even dimensional Lie group admits a left-invariant integrable almost complex structure.  We briefly review this construction.  

Let $G$ be a compact even dimensional Lie group and let $\mathfrak{g}:=\mbox{Lie}(G)$ be the Lie algebra of left-invariant vector fields on $G$ (which we identify with the tangent space at the identity $T_eG$).  Let $\mathfrak{t}$ be any maximal abelian Lie subalgebra of $\mathfrak{g}$, let $\mathfrak{g}_{\mathbb{C}}$ and $\mathfrak{t}_{\mathbb{C}}$ denote the complexificaton of $\mathfrak{g}$ and $\mathfrak{t}$ respectively, and let 
$$
\mathfrak{g}_{\mathbb{C}}=\mathfrak{t}_{\mathbb{C}}\oplus \bigoplus_{\alpha\in \Delta}\mathfrak{g}_\alpha
$$
be the root space decomposition associated to $\mathfrak{t}$ where $\Delta\subset (\mathfrak{t}_{\mathbb{C}})^\ast\backslash \{0\}$ are the set of nonzero roots and  
$$
\mathfrak{g}_{\alpha}:=\{X\in \mathfrak{g}_{\mathbb{C}}~|~\mbox{ad}_HX= \alpha(H)X,~\forall~H\in \mathfrak{t}_{\mathbb{C}}\}
$$
is the root space associated to $\alpha: \mathfrak{t}_\mathbb{C}\rightarrow \mathbb{C}$. (Note that $\mathfrak{g}_0=\mathfrak{t}_{\mathbb{C}}$ is the root space associated to $0\in (\mathfrak{t}_\mathbb{C})^\ast$.)
From the definition, we immediately have
$$
[\mathfrak{g}_{\alpha},\mathfrak{g}_\beta]\subset \mathfrak{g}_{\alpha+\beta}
$$
if $\alpha+\beta\in \Delta\cup\{0\}$.  Otherwise, $[\mathfrak{g}_{\alpha},\mathfrak{g}_\beta]=\{0\}$. In addition, for $\alpha\in R$, we note that $\mathfrak{g}_{\alpha}$ and $\mathfrak{g}_{-\alpha}$ are complex conjugates of one another:
$$
\mathfrak{g}_{-\alpha}=\overline{\mathfrak{g}_{\alpha}}.
$$
With additional work, one can show that $\dim_{\mathbb{C}}\mathfrak{g}_{\alpha}=1$ for all $\alpha\in \Delta$.  Moreover, $\Delta$ can be expressed as a union of \textit{positive} and \textit{negative} roots:
$$
\Delta=\Delta^+\cup \Delta^-
$$
where $\Delta^+\cap \Delta^-=\emptyset$, $\Delta^-=-\Delta^+$, and $\Delta^+$ has the following property: if $\alpha,\beta\in \Delta^+$ and $\alpha+\beta\in \Delta$, then $\alpha+\beta\in \Delta^+$ (see e.g. \cite{Sep2007}).  With this information, the root space decomposition can be rewritten as 
$$
\mathfrak{g}_{\mathbb{C}}=\mathfrak{t}_{\mathbb{C}}\oplus \bigoplus_{\alpha\in \Delta^+}(\mathfrak{g}_\alpha\oplus \overline{\mathfrak{g}_{\alpha}}).
$$

Since we are constructing a left-invariant integrable almost complex structure, the problem can be solved entirely at the Lie algebra level.  Namely, we seek a linear map $\mathcal{J}:\mathfrak{g}\rightarrow \mathfrak{g}$ such that $\mathcal{J}^2=-\mbox{id}$ and $N_{\mathcal{J}}(X,Y)=0$ for all $X,Y\in \mathfrak{g}$, where $N_{\mathcal{J}}$ is the Nijenhuis tensor associated to $\mathcal{J}$ which is given by
$$
N_{\mathcal{J}}(X,Y):=\mathcal{J}[\mathcal{J}X,Y]+\mathcal{J}[X,\mathcal{J}Y]+[X,Y]-[\mathcal{J}X,\mathcal{J}Y].
$$
$\mathcal{J}$ then determines a left-invariant integrable almost complex structure $J$ on $G$ via
$$
J_g:=(l_g)_\ast\circ \mathcal{J}\circ (l_{g^{-1}})_\ast,\hspace*{0.1in}\forall~g\in G,
$$
where $l_g: G\rightarrow G$ is left translation by $g\in G$ and $(l_g)_\ast$ is the pushforward on the tangent bundle.  In particular, $J_e=\mathcal{J}$.  (More often than not, we will not make a distinction between $\mathcal{J}$ and the left-invariant almost complex structure $J$ determined by $\mathcal{J}$.)

The Samelson construction which produces a linear map $\mathcal{J}:\mathfrak{g}\rightarrow \mathfrak{g}$ with the above properties is defined as follows:  we first define a map $\mathcal{J}':\mathfrak{g}_{\mathbb{C}}\rightarrow \mathfrak{g}_{\mathbb{C}}$ on the complexified Lie algebra $\mathfrak{g}_{\mathbb{C}}$ which satisfies the following conditions:
\begin{itemize}
\item[(1)] Choose $\mathcal{J}'$ so that $\mathcal{J}'\mathfrak{t}\subset \mathfrak{t}$ and $\mathcal{J}'|_{\mathfrak{t}}$ squares to negative the identity.  Since $\mathfrak{g}$ is even dimensional, it follows that $\mathfrak{t}$ is also even dimensional.  So this condition can always be satisfied.
\item[(2)] For $\alpha\in \Delta^+$ and $E_\alpha\in \mathfrak{g}_{\alpha}$, define 
$$
\mathcal{J}'E_{\alpha} := iE_{\alpha},\hspace*{0.1in}\mathcal{J}'\overline{E}_{\alpha} := -i\overline{E}_{\alpha}.
$$
\end{itemize}
Clearly, the linear map $\mathcal{J}':\mathfrak{g}_{\mathbb{C}}\rightarrow \mathfrak{g}_{\mathbb{C}}$ squares to negative the identity.  By a straightforward calculation, we also see that $\mathcal{J}'\mathfrak{g}\subset \mathfrak{g}$.  Let $\mathcal{J}:=\mathcal{J}'|_{\mathfrak{g}}:\mathfrak{g}\rightarrow \mathfrak{g}$.  Then clearly $\mathcal{J}^2=-\mbox{id}_{\mathfrak{g}}$. With some additional work, one shows that the Nijenhuis tensor associated to $\mathcal{J}':\mathfrak{g}_{\mathbb{C}}\rightarrow \mathfrak{g}_{\mathbb{C}}$ also vanishes:
$$
N_{\mathcal{J}'}(X,Y)=0,~\forall~X,Y\in \mathfrak{g}_{\mathbb{C}}.
$$
Since $\mathfrak{g}\subset \mathfrak{g}_{\mathbb{C}}$, we also have $N_{\mathcal{J}}(X,Y)=0$ for all $X,Y\in \mathfrak{g}$.  Hence, $\mathcal{J}: \mathfrak{g}\rightarrow \mathfrak{g}$ is the desired linear map that we seek.

We now construct a Samelson complex structure on $G_2$.  This amounts to computing a root space decomposition of $\mathfrak{g}_2$.  From Appendix \ref{AppendixG2Bracket}, it follows that a maximal abelian Lie subalgebra of $\mathfrak{g}_2$ is given by
\begin{equation}
\label{eqCartanG2}
\mathfrak{t}=\mbox{span}_{\mathbb{R}}\{b_1,b_{12}\}.
\end{equation}
One finds that the root space decomposition of $(\mathfrak{g}_2)_{\mathbb{C}}$ with respect to $\mathfrak{t}$ is given by
\begin{equation}
    \label{eqRootSpaceG2}
    (\mathfrak{g}_2)_{\mathbb{C}}=\mathfrak{t}_{\mathbb{C}}\oplus \bigoplus_{k=1}^6(\mathfrak{g}_{\alpha_k}\oplus \overline{\mathfrak{g}_{\alpha_k}}),
\end{equation}
where $\alpha_1,\alpha_2,\dots, \alpha_6$ are a system of postive roots and
\begin{equation}
    \label{eqGalpha1}
    \mathfrak{g}_{\alpha_1}=\mbox{span}_{\mathbb{C}}\{b_5+ib_6\},~\alpha_1(b_1)=i,~\alpha_1(b_{12})=i,
\end{equation}
\begin{equation}
    \label{eqGalpha2}
    \mathfrak{g}_{\alpha_2}=\mbox{span}_{\mathbb{C}}\{b_{14}+ib_{13}\},~\alpha_2(b_1)=i,~\alpha_2(b_{12})=-2i,
\end{equation}
\begin{equation}
    \label{eqGalpha3}
    \mathfrak{g}_{\alpha_3}=\mbox{span}_{\mathbb{C}}\{(2b_2+b_{14})+i(2b_4+b_{13})\},~\alpha_3(b_1)=i,~\alpha_3(b_{12})=0,
\end{equation}
\begin{equation}
    \label{eqGalpha4}
    \mathfrak{g}_{\alpha_4}=\mbox{span}_{\mathbb{C}}\{-(b_5+2b_{11})+i(b_6-2b_{10})\},~\alpha_4(b_1)=i,~\alpha_4(b_{12})=-i,
\end{equation}
\begin{equation}
    \label{eqGalpha5}
    \mathfrak{g}_{\alpha_5}=\mbox{span}_{\mathbb{C}}\{b_3+ib_7\},~\alpha_5(b_1)=2i,~\alpha_5(b_{12})=-i,
\end{equation}
\begin{equation}
    \label{eqGalpha6}
    \mathfrak{g}_{\alpha_6}=\mbox{span}_{\mathbb{C}}\{(b_7-2b_8)+i(b_3+2b_9)\},~\alpha_6(b_1)=0,~\alpha_6(b_{12})=-i.
\end{equation}
Let 
\begin{equation}
\label{eqEalpha1}
E_{\alpha_1}:=b_5+ib_6,
\end{equation}
\begin{equation}
\label{eqEalph2}
E_{\alpha_2}:=b_{14}+ib_{13},
\end{equation}
\begin{equation}
    \label{eqEalpha3}
    E_{\alpha_3}:=(2b_2+b_{14})+i(2b_4+b_{13}),
\end{equation}
\begin{equation}
    \label{eqEalpha4}
    E_{\alpha_4}:=-(b_5+2b_{11})+i(b_6-2b_{10}),
\end{equation}
\begin{equation}
    \label{eqEalpha5}
    E_{\alpha_5}:=b_3+ib_7,
\end{equation}
\begin{equation}
    \label{eqEalpha6}
    E_{\alpha_6}:=(b_7-2b_8)+i(b_3+2b_9).
\end{equation}
We define $\mathcal{J}':(\mathfrak{g}_2)_{\mathbb{C}}\rightarrow (\mathfrak{g}_2)_{\mathbb{C}}$ to be the linear map given by
\begin{align}
\label{eqJ'G2a}
\mathcal{J}'b_1:=-\frac{1}{\sqrt{3}}&b_1-\frac{2}{\sqrt{3}}b_{12},\hspace*{0.1in}\mathcal{J}'b_{12}:=\frac{2}{\sqrt{3}}b_1+\frac{1}{\sqrt{3}}b_{12},\\
\label{eqJ'G2b}
&\mathcal{J}'E_{\alpha_j}:=iE_{\alpha_j},~\mathcal{J}'\overline{E}_{\alpha_j}=-i\overline{E}_{\alpha_j}.
\end{align}
Then $(\mathcal{J}')^2=-\mbox{id}$, $N_{\mathcal{J'}}=0$, and $\mathcal{J'}\mathfrak{g}_2\subset \mathfrak{g}_2$.  The linear map on $\mathfrak{g}_2$ which induces the left-invariant integrable almost complex structure on $G_2$ is then $\mathcal{J}:=\mathcal{J'}\big|_{\mathfrak{g}_2}$.  Explicitly, $\mathcal{J}:\mathfrak{g}_2\rightarrow \mathfrak{g}_2$ is given by 
\begin{equation}
    \label{eqG2Jone}
    \mathcal{J}b_1=-\frac{1}{\sqrt{3}}b_1-\frac{2}{\sqrt{3}}b_{12},~\mathcal{J}b_2=-b_4,~\mathcal{J}b_3=-b_7,
\end{equation}
\begin{equation}
    \label{eqG2Jtwo}
    \mathcal{J}b_4=b_2,~\mathcal{J}b_5=-b_6,~\mathcal{J}b_6=b_5,
\end{equation}
\begin{equation}
    \label{eqG2Jthree}
    \mathcal{J}b_7=b_3,~\mathcal{J}b_8=b_3+b_9,~\mathcal{J}b_9=b_7-b_8,
\end{equation}
\begin{equation}
    \label{eqG2Jfour}
    \mathcal{J}b_{10}=b_5+b_{11},~\mathcal{J}b_{11}=b_6-b_{10},~\mathcal{J}b_{12}=\frac{2}{\sqrt{3}}b_1+\frac{1}{\sqrt{3}}b_{12},
\end{equation}
\begin{equation}
    \label{eqG2Jfive}
    \mathcal{J}b_{13}=b_{14},~\mathcal{J}b_{14}=-b_{13}.
\end{equation}
\begin{remark}
    \label{rmkKilling}
    In the Samelson construction, $\mathcal{J}|_{\mathfrak{t}}:\mathfrak{t}\rightarrow \mathfrak{t}$ is any linear map from $\mathfrak{t}$ to itself which squares to $-\mbox{id}|_{\mathfrak{t}}$.  As a consequence of this, it is always possible to construct a Samelson complex structure which is compatible with the Killing form. In the present case, $\mathfrak{t}$ is spanned by the basis elements $b_1,b_{12}$. Let $K$ be the  Killing form of $\mathfrak{g}_2$, which we recall is given by
    $$
        K(X,Y):=\mbox{tr}(\mbox{ad}_X\circ \mbox{ad}_Y),\hspace*{0.2in}\forall~X,Y\in \mathfrak{g}_2.
    $$
    By direct calculation, one verifies that (\ref{eqJ'G2a}) implies that 
    \begin{equation}
    \label{eqKJcompatCartan}
    K(\mathcal{J}H,\mathcal{J}H')=K(H,H'),\hspace*{0.2in}\forall~H,H'\in \mathfrak{t}.
    \end{equation}
    One can show\footnote{See Lemma \ref{lemSamelsonKilling}.} that (\ref{eqKJcompatCartan}) implies that $\mathcal{J}$ and $K$ must be compatible, that is, 
    $$
    K(\mathcal{J}X,\mathcal{J}Y)=K(X,Y),\hspace*{0.2in}\forall~X,Y\in\mathfrak{g}_2.
    $$
    This compatibility between $\mathcal{J}$ and $K$ turns out to be essential if one is to construct left-invariant $\mathcal{J}$-Hermtian metrics which are also SKT.

    Theorem 3.2 of \cite{FG2023} shows that if $G$ is a compact semi-simple Lie group, $I$ is a Samelson complex structure on $G$, and $g$ is a left-invariant $I$-Hermitian metric such that $(g,I)$ is an SKT structure, then $I$ must be compatible with some bi-invariant metric $g_0$ on $G$.  Consequently, if one is searching for left-invariant $I$-Hermitian metrics which are SKT, then Theorem 3.2 of \cite{FG2023} provides a necessary condition for the Samelson complex structure $I$.
    
    Returning to the present case, since $G_2$ is a compact simple Lie group, it follows that every bi-invariant metric on $G_2$ is equal to $-\lambda K$ (at the identity element) for some $\lambda>0$.  Since $\mathcal{J}$ is compatible with the Killing form, it follows that the left-invariant (integrable) almost complex structure $J$ induced by $\mathcal{J}$ is compatible with every bi-invariant metric on $G_2$.  As such, $\mathcal{J}$ satisfies the necessary condition on the Samelson complex structure given by Theorem 3.2 of \cite{FG2023}.  Ultimately, this is the justification for the form of (\ref{eqJ'G2a}).
\end{remark}

\section{SKT-metrics on $G_2$}
\label{secG2SKT}
\subsection{Review of SKT metrics}
In this section, we give a very brief review of SKT metrics.  A nice survey of SKT metrics is \cite{FT2009}.  An alternate viewpoint on SKT metrics which studies these objects within the framework of generalized geometry \cite{Hitchin2010} is given in  \cite{Cav2012}. 
\begin{definition}
    Let $(M,g,J)$ be a Hermitian manifold and let $\omega(\cdot,\cdot):=g(J\cdot,\cdot)$ be the associated fundamental form.  Then $(M,g,J)$ is SKT (or pluriclosed) if $\partial \overline{\partial}\omega = 0$.
\end{definition}
\noindent The following result shows that K\"{a}hler geometry is a special case of SKT geometry. 
\begin{proposition}
    Every K\"{a}hler manifold is SKT.  
\end{proposition}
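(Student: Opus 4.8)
The plan is to place the definitions of \emph{K\"{a}hler} and \emph{SKT} side by side and exploit the bidegree decomposition of the exterior derivative on a complex manifold. Recall that $(M,g,J)$ being K\"{a}hler means precisely that the fundamental form $\omega$ is closed, i.e. $d\omega=0$. Since $J$ is integrable, the exterior derivative splits as $d=\partial+\overline{\partial}$, where $\partial$ sends a form of bidegree $(p,q)$ to one of bidegree $(p+1,q)$ and $\overline{\partial}$ sends it to one of bidegree $(p,q+1)$. The form $\omega$ is a real $(1,1)$-form, so applying this decomposition gives $d\omega=\partial\omega+\overline{\partial}\omega$, in which $\partial\omega$ has bidegree $(2,1)$ and $\overline{\partial}\omega$ has bidegree $(1,2)$.

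The key step is then a type argument. The spaces of $(2,1)$-forms and $(1,2)$-forms are distinct summands in the bidegree decomposition of the complexified bundle $\wedge^3 T^\ast M\otimes\mathbb{C}$, so the single equation $d\omega=0$ forces each homogeneous component to vanish separately. In particular $\overline{\partial}\omega=0$. Applying $\partial$ to this identity yields $\partial\overline{\partial}\omega=\partial(\overline{\partial}\omega)=\partial(0)=0$, which is exactly the SKT condition. Thus every K\"{a}hler metric is SKT.

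There is no genuine obstacle here: the content of the proposition is entirely the observation that closedness of a $(1,1)$-form splits, by type, into separate $\partial$- and $\overline{\partial}$-closedness, after which the SKT condition follows immediately. If one wished to phrase the conclusion without the type argument, one could alternatively note that $d\omega=0$ makes $\omega$ in particular $\partial$-closed, and then use $\partial\overline{\partial}=-\overline{\partial}\partial$ to write $\partial\overline{\partial}\omega=-\overline{\partial}(\partial\omega)=0$; either route produces the result in a single line once the definitions are unwound.
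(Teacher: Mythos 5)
Your proof is correct and follows essentially the same route as the paper: decompose $d\omega=\partial\omega+\overline{\partial}\omega$, use the fact that the $(2,1)$ and $(1,2)$ components must vanish separately, and conclude $\partial\overline{\partial}\omega=0$. The only cosmetic difference is that the paper stops at $\partial\omega=\overline{\partial}\omega=0$ and declares the SKT condition immediate, whereas you spell out the final application of $\partial$ to $\overline{\partial}\omega=0$.
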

\begin{proof}   
    Let $(M,g,J)$ be a K\"{a}hler manifold with fundamental form $\omega$.  Then 
    $$
        d\omega = \partial \omega + \overline{\partial}\omega = 0.
    $$
    Since $\omega$ is (1,1), $\partial \omega$ is (2,1) and $\overline{\partial}\omega$ is (1,2) which implies that 
    $$
        \partial \omega =0,\hspace*{0.2in} \overline{\partial}\omega = 0.
    $$
    From this, it immediately follows that $(M,g,J)$ is SKT.
\end{proof}
\noindent The following result will provide motivation for the ``T" in SKT.
\begin{proposition}
    \label{propSKTdc}
    Let $(M,g,J)$ be a Hermitian manifold with fundamental form $\omega$ and let $c(X,Y,Z):=d\omega(JX,JY,JZ)$.  Then $(M,g,J)$ is SKT if and only if $dc=0$.
\end{proposition}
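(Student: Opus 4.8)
The plan is to translate the whole statement into the Dolbeault bidegree decomposition and reduce the identity $dc=0$ to $\partial\overline{\partial}\omega=0$ by a short, direct computation. First I would record the basic type information. Since $\omega$ is a real $(1,1)$-form and $d=\partial+\overline{\partial}$, the three-form $d\omega=\partial\omega+\overline{\partial}\omega$ splits into its $(2,1)$-part $\partial\omega$ and its $(1,2)$-part $\overline{\partial}\omega$, and reality of $\omega$ gives $\overline{\partial}\omega=\overline{\partial\omega}$.

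The key preliminary is to understand the ``apply $J$ in every slot'' operator $C_J$ defined by $(C_J\alpha)(X_1,\dots,X_k):=\alpha(JX_1,\dots,JX_k)$, so that $c=C_J(d\omega)$. Because $J=i$ on $T^{1,0}M$ and $J=-i$ on $T^{0,1}M$, a $(1,0)$-form $\theta$ satisfies $\theta\circ J=i\theta$ and a $(0,1)$-form $\theta$ satisfies $\theta\circ J=-i\theta$; since $C_J$ is an algebra homomorphism of the exterior algebra (on a wedge of one-forms it acts slotwise, as one sees from expanding the determinant $\det[\theta_i(JX_j)]$), it follows that $C_J\alpha=i^{p-q}\alpha$ on any $(p,q)$-form $\alpha$. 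Applying this to the two pieces of $d\omega$, of bidegrees $(2,1)$ and $(1,2)$, yields
$$
c=C_J(d\omega)=i\,\partial\omega-i\,\overline{\partial}\omega=i(\partial\omega-\overline{\partial}\omega).
$$
As a consistency check, $\partial\omega-\overline{\partial}\omega=\partial\omega-\overline{\partial\omega}$ is purely imaginary, so $c$ is a genuine real three-form, as it must be.

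Next I would simply differentiate. Using $d=\partial+\overline{\partial}$ together with $\partial^2=\overline{\partial}^2=0$,
$$
dc=i\,d(\partial\omega-\overline{\partial}\omega)=i\,(\overline{\partial}\partial\omega-\partial\overline{\partial}\omega),
$$
and invoking $\partial\overline{\partial}+\overline{\partial}\partial=0$, which is the mixed-bidegree part of $d^2=0$, collapses this to
$$
dc=-2i\,\partial\overline{\partial}\omega.
$$
Hence $dc=0$ if and only if $\partial\overline{\partial}\omega=0$, which is exactly the SKT condition, completing the proof.

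I expect the only genuinely delicate point to be the slotwise $J$-action computation: pinning down the exponent $i^{p-q}$, and hence the relative sign between the $\partial\omega$ and $\overline{\partial}\omega$ contributions to $c$. An error there would spoil the cancellation that produces the clean factor $-2i$; everything after that is the formal calculus of $\partial$ and $\overline{\partial}$.
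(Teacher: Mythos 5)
Your proof is correct and follows essentially the same route as the paper's: both identify $c = i\,\partial\omega - i\,\overline{\partial}\omega$ via the bidegree decomposition of $d\omega$ and the $\pm i$ action of $J$ on $(1,0)$- and $(0,1)$-directions, and then conclude $dc = -2i\,\partial\overline{\partial}\omega$ by the same differentiation. The only cosmetic difference is that the paper verifies this identity by evaluating both sides on vector fields of pure type, whereas you derive it from the general fact that applying $J$ slotwise acts as $i^{p-q}$ on $(p,q)$-forms; the substance is identical.
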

\begin{proof}
    Let 
    $$
        \beta:=i\partial \omega - i\overline{\partial}\omega.
    $$
    Let $X^+$ and $X^-$ denote (1,0) and (0,1) vector fields.  Then $JX^+=iX^+$ and $JX^-=-iX^-$.
    Then
    \begin{align*}
        \beta(X^+,Y^+,Z^-)&=i(\partial\omega)(X^+,Y^+,Z^-)\\
        &=i(\partial\omega +\overline{\partial}\omega)(X^+,Y^+,Z^-)\\
        &=id\omega(X^+,Y^+,Z^-)\\
        &=d\omega(JX^+,JY^+,JZ^-).
    \end{align*}
    Likewise, one finds
    $$
      \beta(X^-,Y^-,Z^+)=d\omega(JX^-,JY^-,JZ^+).  
    $$
    Since $\beta$ and $c:=d\omega(J\cdot,J\cdot,J\cdot)$ are 3-forms of type $(2,1)+(1,2)$, the above calculation implies $\beta =c$.  From this, we have 
    \begin{align*}
        dc&=d\beta\\
        &=(\partial +\overline{\partial})\beta\\
        &=i\overline{\partial}\partial\omega -i\partial\overline{\partial}\omega\\
        &=-2i\partial\overline{\partial}\omega.
    \end{align*}
    Hence, $dc = 0$ if and only if $\partial\overline{\partial}\omega = 0$.
\end{proof}
Recall that a Hermitian connection on $(M,g,J)$ is a connection $\nabla$ satisfying $\nabla g =0$ and $\nabla J=0$.  There are infinitely many Hermitian connections on $(M,g,J)$ and each has its own unique torsion.  (More precisely, there is a one-to-one correspondence between the space of $TM$-valued (1,1)-forms $\Omega^{(1,1)}(M;TM)$ and the space of Hermitian connections on $(M,g,J)$ (cf Proposition 2.15 of \cite{PY2023})).  In particular, there is a unique Hermitian connection whose torsion $T^B$ is totally skew-symmetric in the sense that
$$
g(T^B(X,Y),Z)
$$
is a 3-form.  The Hermitian connection with torsion $T^B$ is the Bismut (or Strominger) connection. This connection has its roots in string theory (e.g. \cite{St1986,Hull1986,GHR1984}) but also arose for purely mathmatical reasons in \cite{Bismut1989}.   The torsion $T^B$ is given by none other than the 3-form $c$ appearing in Proposition \ref{propSKTdc}:
$$
g(T^B(X,Y),Z)=c:=d\omega(JX,JY,JZ).
$$
Hence, an SKT manifold is strictly non-K\"{a}hler if and only if its Bismut connection has nonzero torsion.  

Now let $G$ be a compact even dimensional semi-simple Lie group and let $g$ be any bi-invariant metric on $G$.  From \cite{SSTVP1988}, there always exists a Samelson complex structure $J$ on $G$ such that  $(g,J)$ is a (left-invariant) SKT structure.  To simplify things (and for the sake of making the paper as self contained as possible), we give a proof of this fact for the special case where $g$ is determined by a multiple of the Killing form, that is, $g_e = -\lambda K$ for $\lambda>0$. 
\begin{lemma}
    \label{lemSamelsonKilling}
    Let $G$ be an even dimensional compact semi-simple Lie group and let $K$ be the Killing form of $\mathfrak{g}:=\mbox{Lie}(G)$. Then there exists a left-invariant integrable almost complex structure $J$ which is compatible with $K$.
\end{lemma}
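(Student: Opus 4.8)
The plan is to exhibit such a $J$ directly via the Samelson construction, choosing the restriction $\mathcal{J}|_{\mathfrak{t}}$ to be compatible with $K$ and then checking that this single choice already forces compatibility on all of $\mathfrak{g}$. First, since $G$ is compact and semi-simple, the Killing form $K$ is negative definite, so $-K$ is a genuine inner product; restricted to the maximal abelian subalgebra $\mathfrak{t}$ (which is even-dimensional because $\mathfrak{g}$ is) it remains a definite inner product. Any inner product space of even real dimension admits a compatible complex structure — pair up a $(-K)$-orthonormal basis $H_1,H_1',\ldots$ and set $H_j\mapsto H_j'$, $H_j'\mapsto -H_j$ — so I can choose $\mathcal{J}|_{\mathfrak{t}}$ to square to $-\mathrm{id}$ and to satisfy $K(\mathcal{J}H,\mathcal{J}H')=K(H,H')$ for all $H,H'\in\mathfrak{t}$. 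Feeding this choice into the Samelson construction (together with $\mathcal{J}E_\alpha=iE_\alpha$ and $\mathcal{J}\overline{E}_\alpha=-i\overline{E}_\alpha$ on the root spaces) produces a linear map $\mathcal{J}:\mathfrak{g}\to\mathfrak{g}$ with $\mathcal{J}^2=-\mathrm{id}$ and $N_{\mathcal{J}}=0$; these two properties are already furnished by the Samelson construction reviewed above, so the only remaining task is compatibility with $K$.

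For that, I would extend both $K$ and $\mathcal{J}$ (complex-bilinearly and complex-linearly, respectively) to $\mathfrak{g}_{\mathbb{C}}$ and verify the bilinear identity $K(\mathcal{J}X,\mathcal{J}Y)=K(X,Y)$ on the basis $\{H_i\}\cup\{E_\alpha,\overline{E}_\alpha:\alpha\in\Delta^+\}$; since both sides are bilinear in $(X,Y)$, checking on basis pairs suffices to conclude the identity on all of $\mathfrak{g}$. The essential structural input is the standard orthogonality of root spaces: from the invariance $K([H,X],Y)=-K(X,[H,Y])$ one gets $(\alpha+\beta)(H)\,K(X,Y)=0$ for $X\in\mathfrak{g}_\alpha$, $Y\in\mathfrak{g}_\beta$ and all $H\in\mathfrak{t}_{\mathbb{C}}$, so that $K(\mathfrak{g}_\alpha,\mathfrak{g}_\beta)=0$ unless $\alpha+\beta=0$. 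In particular each root space is $K$-orthogonal to $\mathfrak{t}_{\mathbb{C}}$ and pairs nontrivially only with its conjugate $\mathfrak{g}_{-\alpha}=\overline{\mathfrak{g}_\alpha}$.

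With this in hand the verification splits into a few routine cases. On pairs from $\mathfrak{t}$ the identity is exactly the hypothesis arranged in the first step. Mixed pairs $K(\mathcal{J}H,\mathcal{J}E_\alpha)$ vanish on both sides because $\mathcal{J}H\in\mathfrak{t}$ and $K(\mathfrak{t},\mathfrak{g}_\alpha)=0$. For two positive-root vectors, $K(\mathcal{J}E_\alpha,\mathcal{J}E_\beta)=i^2K(E_\alpha,E_\beta)=-K(E_\alpha,E_\beta)$, but $\alpha+\beta\neq 0$ forces $K(E_\alpha,E_\beta)=0$, so both sides vanish; the same argument handles two conjugate vectors $\overline{E}_\alpha,\overline{E}_\beta$. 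The only surviving case is $E_\alpha$ against $\overline{E}_\beta$, where $\mathcal{J}$ contributes factors $i$ and $-i$, giving $K(\mathcal{J}E_\alpha,\mathcal{J}\overline{E}_\beta)=(i)(-i)K(E_\alpha,\overline{E}_\beta)=K(E_\alpha,\overline{E}_\beta)$, so compatibility holds automatically. The main obstacle is precisely isolating that the only nonzero $K$-pairings among root vectors occur between $\mathfrak{g}_\alpha$ and $\mathfrak{g}_{-\alpha}$, where $\mathcal{J}$ acts with opposite eigenvalues and the two factors of $i$ cancel; once the root-space orthogonality is in place, the eigenvalue bookkeeping makes every potentially obstructing term (an $i^2$ or a $(-i)^2$) land on a pairing that $K$ already annihilates.
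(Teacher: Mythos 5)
Your proposal is correct and follows essentially the same route as the paper's proof: exploit the $K$-orthogonality of root spaces so that the factors of $i$ only ever multiply pairings that already vanish (with the $(i)(-i)$ cancellation in the $\mathfrak{g}_\alpha$--$\mathfrak{g}_{-\alpha}$ case), and then choose $\mathcal{J}|_{\mathfrak{t}}$ to rotate a $(-K)$-orthonormal basis of the even-dimensional $\mathfrak{t}$. The only cosmetic difference is that you derive the root-space orthogonality from $\mathrm{ad}$-invariance of $K$, whereas the paper simply cites it.
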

\begin{proof}
    Let $\mathfrak{t}$ be a maximal abelian Lie subalgebra of $\mathfrak{g}$ and let
    $$
        \mathfrak{g}_{\mathbb{C}}=\mathfrak{t}_\mathbb{C}\oplus \bigoplus_{\alpha\in \Delta^+}(\mathfrak{g}_{\alpha}\oplus \mathfrak{g}_{-\alpha})
    $$
    be the root space decomposition associated to $\mathfrak{t}$ where $\Delta^+$ is a system of positive roots. Recall that $K$ has the following properties with respect to the root space decomposition (cf \cite{Sep2007}): for all $\alpha,\beta\in \Delta^+\cup (-\Delta^+)\cup \{0\}$,
    $$
        K|_{\mathfrak{g}_{\alpha}\times \mathfrak{g}_{\beta}} = 0\hspace*{0.1in}\mbox{if}~\alpha+\beta \neq 0
    $$
    and 
    $$
    K|_{\mathfrak{g}_{\alpha}\times \mathfrak{g}_{-\alpha}}\hspace*{0.1in}\mbox{is non-degenerate},
    $$
    where $\mathfrak{g}_0=\mathfrak{t}_{\mathbb{C}}$.  Let $X_\alpha\in \mathfrak{g}_{\alpha}$, $Y_\beta\in \mathfrak{g}_{\beta}$, and $W\in \mathfrak{t}$ with $\alpha,\beta\in \Delta^+$.  Recall that by taking the complex conjugate, we have $\overline{X}_{\alpha}\in \mathfrak{g}_{-\alpha}$ and $\overline{Y}_\beta\in \mathfrak{g}_{-\beta}$. For the moment, let $J$ be any arbitrary Samelson complex structure. Since $\alpha+\beta\neq 0$ (due to the fact that $\alpha$ and $\beta$ are positive roots), we have
    $$
K(JX_{\alpha},JY_\beta)=i^2K(X_\alpha,Y_\beta)=0=K(X_\alpha,Y_\beta),
    $$
     $$
K(J\overline{X}_{\alpha},J\overline{Y}_\beta)=(-i)^2K(\overline{X}_\alpha,\overline{Y}_\beta)=0=K(\overline{X}_\alpha,\overline{Y}_\beta).
    $$
    Also,
    $$
    K(JX_\alpha,J\overline{Y}_{\beta})=(i)(-i)K(X_\alpha,\overline{Y}_{\beta})=K(X_\alpha,\overline{Y}_{\beta}),
    $$
    $$
    K(JX_\alpha,JW)=iK(X_\alpha,JW)=0=K(X_\alpha,W),
    $$
    $$
    K(J\overline{X}_{\alpha},JW)=-iK(\overline{X}_{\alpha},JW)=0=K(\overline{X}_{\alpha},W),
    $$
    where we have used the fact that $J\mathfrak{t}\subset \mathfrak{t}$ in the Samelson construction.  The only place where $J$ may not be compatible is on the restriction of $K$ to $\mathfrak{t}$.  
    
    In the Samelson construction, we are free to choose $J|_{\mathfrak{t}}$ to be any linear map which squares to $-\mbox{id}_{\mathfrak{t}}$.  Since $G$ is compact and semisimple, $K$ is negative definite (see e.g. \cite{Sep2007}).  Hence, we can choose a basis $W_1,\dots, W_{2k}$ of $\mathfrak{t}$ which is orthonormal with respect to $-K$.  Define $J|_{\mathfrak{t}}$ by
    $$
        JW_j:=W_{k+j},\hspace*{0.2in}JW_{k+j}:=-W_j,\hspace*{0.2in}\mbox{for }j\le k.
    $$
    With this definition, it now follows that $J$ is compatible on $K|_{\mathfrak{t}\times \mathfrak{t}}$.  Combining this fact with the previous calculation shows that $J$ is compatible with $K|_{\mathfrak{g}_{\mathbb{C}}\times \mathfrak{g}_{\mathbb{C}}}$.  In particular, since $J\mathfrak{g}\subset \mathfrak{g}$, $J$ is also compatible with $K|_{\mathfrak{g}\times \mathfrak{g}}$.  This completes the proof.
\end{proof}

\begin{proposition}
    \label{propSKTLieGroup}
    Let $G$ be a compact semi-simple Lie group with Killing form $K$ and let $J$ be any left-invariant integrable almost complex structure which is compatible with $K$.  Then for any real number $\lambda >0$, $(J,-\lambda K)$ is a left-invariant SKT structure on $G$ (where the bi-invariant metric determined by $-\lambda K$ is also denoted with the same symbol).
\end{proposition}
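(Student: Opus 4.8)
\emph{The plan is} to reduce the pluriclosed condition to the closedness of the Bismut torsion $3$-form and then identify that $3$-form with (a constant multiple of) the Cartan $3$-form of $G$, which is manifestly closed. By Proposition \ref{propSKTdc}, $(J,-\lambda K)$ is SKT if and only if $dc=0$, where $c(X,Y,Z)=d\omega(JX,JY,JZ)$ and $\omega=g(J\cdot,\cdot)$ with $g=-\lambda K$. Since $J$, $g$, and hence $\omega$ are all left-invariant, the entire computation takes place at the level of $\mathfrak{g}=\mbox{Lie}(G)$, where $d$ is the Chevalley--Eilenberg differential; for the left-invariant $2$-form $\omega$ this reads $d\omega(A,B,C)=-g(J[A,B],C)+g(J[A,C],B)-g(J[B,C],A)$. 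The two structural inputs I would use are: (i) $g=-\lambda K$ is $\mbox{ad}$-invariant (bi-invariant), being a multiple of the Killing form, so the \emph{Cartan $3$-form} $\sigma(X,Y,Z):=g([X,Y],Z)$ is totally skew-symmetric (skew in the first two slots from $[A,B]=-[B,A]$, and cyclic since $\mbox{ad}$-invariance gives $g([A,B],C)=g([B,C],A)$); and (ii) the compatibility $g(JX,JY)=g(X,Y)$, which forces $J$ to be skew, $g(JX,Y)=-g(X,JY)$.

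First I would evaluate $c(X,Y,Z)=d\omega(JX,JY,JZ)$ and use compatibility to strip off two of the three $J$'s, obtaining the compact expression $c(X,Y,Z)=-g([JX,JY],Z)+g([JX,JZ],Y)-g([JY,JZ],X)$; equivalently $c=-c_2$, where $c_2(X,Y,Z):=\sigma(JX,JY,Z)-\sigma(JX,JZ,Y)+\sigma(JY,JZ,X)$. Next I would feed in integrability: the vanishing of the Nijenhuis tensor yields $[JA,JB]=[A,B]+J[JA,B]+J[A,JB]$, and substituting this into each of the three bracketed terms (again invoking $g(J\cdot,\cdot)=-g(\cdot,J\cdot)$) splits $c$ into a $J$-free part and a part carrying two $J$'s. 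Reordering everything with the total skew-symmetry of $\sigma$, the $J$-free part collapses to $-3\sigma(X,Y,Z)$, while the six remaining two-$J$ terms reassemble into exactly $2c_2(X,Y,Z)$. Hence $c=-3\sigma+2c_2$, and since $c_2=-c$ this becomes $c=-3\sigma-2c$, which I can solve to obtain the clean identity $c=-\sigma$; that is, the Bismut torsion $3$-form equals the negative of the Cartan $3$-form.

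It then remains to observe that $\sigma$ is closed. Being left-invariant and $\mbox{ad}$-invariant, $\sigma$ is bi-invariant, and bi-invariant forms on a Lie group are closed (via the inversion map $\nu(g)=g^{-1}$, one has $\nu^\ast\sigma=-\sigma$ and $\nu^\ast(d\sigma)=d\sigma$, forcing $d\sigma=0$; alternatively, $d\sigma=0$ can be checked directly from the Chevalley--Eilenberg formula using the Jacobi identity and the $\mbox{ad}$-invariance of $g$). Therefore $dc=-d\sigma=0$, so $(J,-\lambda K)$ is SKT. The factor $\lambda>0$ merely rescales $\omega$, $\sigma$, and $c$ and is irrelevant to closedness, while positive definiteness of $-\lambda K$ on the compact semisimple $\mathfrak{g}$ guarantees that $(g,J)$ is a genuine Hermitian structure to begin with.

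\emph{The main obstacle} I anticipate is the middle step, which is \emph{self-referential}: the two-$J$ remainder is itself $\pm c$, and the payoff $c=-\sigma$ emerges only after the six two-$J$ terms are reordered—using the total skew-symmetry of $\sigma$—so that they recombine precisely into $2c_2$. Getting the signs and the Chevalley--Eilenberg conventions exactly right throughout this bookkeeping is where the real care is needed; the rest of the argument is purely structural.
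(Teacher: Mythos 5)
Your proposal is correct, and its skeleton coincides with the paper's: both reduce the SKT condition to $dc=0$ via Proposition \ref{propSKTdc}, both establish the key identity that the Bismut torsion equals the negative of the Cartan $3$-form, $c(X,Y,Z)=-g([X,Y],Z)$, and both conclude from the closedness of that form. The two executions differ, however, at each step. To reach $c=-\sigma$, the paper first simplifies $d\omega$ itself: using the ad-invariance identity $\omega([X,Y],Z)=\omega(Y,J[X,JZ])$ it rewrites the Chevalley--Eilenberg sum until the Nijenhuis tensor appears explicitly, giving $d\omega(X,Y,Z)=-\omega(N_J(X,Y),Z)-\omega([JX,JY],Z)=-\omega([JX,JY],Z)$, after which $c(X,Y,Z)=-g([X,Y],Z)$ drops out in one line. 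You instead expand $c$ directly, substitute the integrability identity $[JA,JB]=[A,B]+J[JA,B]+J[A,JB]$ into each bracket, and solve the self-referential relation $c=-3\sigma+2c_2=-3\sigma-2c$; this is valid (I checked the skew-symmetry bookkeeping: the $J$-free terms do sum to $-3\sigma$ and the six two-$J$ terms do reassemble into $2c_2$), but it is the more delicate route, exactly as you anticipate in your closing remark. At the final step the roles reverse: the paper proves $dc=0$ by a hands-on Chevalley--Eilenberg computation using the Jacobi identity twice and ad-invariance twice, whereas your inversion-map argument ($\nu^\ast\sigma=-\sigma$ since $\sigma$ is bi-invariant of odd degree, while $\nu^\ast d\sigma=+d\sigma$ and $\nu^\ast d\sigma=d\nu^\ast\sigma=-d\sigma$) is shorter, more conceptual, and proves the more general fact that every bi-invariant form on a Lie group is closed; it does require the standard facts that bi-invariant forms are determined by their value at $e$ and that $d\nu_e=-\mathrm{id}$, which the paper's purely algebraic computation avoids. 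In short: same theorem, same pivotal identity, with your version trading a cleaner first step for a cleaner last step.
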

\begin{proof}
Let $\mathfrak{g}:=\mbox{Lie}(G)$.  Recall that the Killing form is compatible with the adjoint action of $G$, that is,
\begin{equation}
\label{eqKAd}
K(\mbox{Ad}_g\cdot, \mbox{Ad}_g\cdot)=K(\cdot,\cdot),\hspace*{0.2in}\forall~g\in G.
\end{equation}
This implies that $K$ is invariant with respect to the adjoint action of $\mathfrak{g}$ in the sense that
\begin{equation}
\label{eqKad}
K([X,Y],Z)+K(Y,[X,Z])=0,\hspace*{0.2in}\forall~X,Y,Z\in \mathfrak{g}.
\end{equation}
Since $G$ is compact and semi-simple, $K$ is negative definite.  Hence, $-\lambda K$ for $\lambda>0$ is positive definite.  Note that (\ref{eqKad}) is equivalent to the statement that the left-invariant metric determined by $-\lambda K$ is bi-invariant.  Let $K'$ denote the bi-invariant metric determined by $-\lambda K$.  Then $(J,K')$ is a (left-invariant) Hermitian structure on $G$.

We now turn to computing $c:=d\omega(J\cdot,J\cdot,J\cdot)$.  First, note that
\begin{align*}
    \omega([X,Y],Z)&=K'(J[X,Y],Z)\\
    &=-K'([X,Y],JZ)\\
    &=K'(Y,[X,JZ])\\
    &=\omega(Y,J[X,JZ]).
\end{align*}
Using this identity and the left-invariance of $\omega$ gives
\begin{align*}
    d\omega(X,Y,Z)&=-\omega([X,Y],Z)-\omega([Y,Z],X)-\omega([Z,X],Y)\\
    &=-\omega([X,Y],Z)-\omega(Z,J[Y,JX])+\omega([X,Z],Y)\\
    &=-\omega([X,Y],Z)-\omega(J[JX,Y],Z)+\omega(Z,J[X,JY])\\
    &=-\omega([X,Y],Z)-\omega(J[JX,Y],Z)-\omega(J[X,JY],Z)\\
    &=-\omega(N_J(X,Y),Z)-\omega([JX,JY],Z)\\
    &=-\omega([JX,JY],Z),
\end{align*}
where the Nijenhuis tensor $N_J=0$ since $J$ is assumed to be integrable. Then 
$$
c(X,Y,Z):=d\omega(JX,JY,JZ)=-\omega([X,Y],JZ)=-K'([X,Y],Z).
$$
So for $W,X,Y,Z\in \mathfrak{g}$, we have
\begin{align*}
    dc(W,X,Y,Z)&=-c([W,X],Y,Z)+c([W,Y],X,Z)-c([W,Z],X,Y)\\
    &-c([X,Y],W,Z)+c([X,Z],W,Y)-c([Y,Z],W,X)\\
    &=K'([[W,X],Y],Z)-K'([[W,Y],X],Z)+K'([[W,Z],X],Y)\\
    &+K'([[X,Y],W],Z)-K'([[X,Z],W],Y)+K'([[Y,Z],W],X)\\
    &=K'([[W,X],Y],Z)+K'([[Y,W],X],Z)+K'([[W,Z],X],Y)\\
    &+K'([[X,Y],W],Z)+K'([[Z,X],W],Y)+K'([[Y,Z],W],X)\\
    &=-K'([[X,W],Z],Y)+K'([[Y,Z],W],X)\\
    &=K'([Z,[X,W]],Y)+K'([[Y,Z],W],X)\\
    &=-K'([X,W],[Z,Y])+K'([[Y,Z],W],X)\\
    &=K'([W,X],[Z,Y])+K'([[Y,Z],W],X)\\
    &=-K'(X,[W,[Z,Y]])+K'([[Y,Z],W],X)\\
    &=-K'(X,[[Y,Z],W])+K'([[Y,Z],W],X)\\
    &=0,
\end{align*}
where the fourth equality follows from the Jacobi identity applied twice and the sixth and eighth equalities follow from the $\mbox{ad}$-invariance of $K$.  By Proposition \ref{propSKTdc}, $(J,K')=(J,-\lambda K)$ is a left-invariant SKT structure on $G$ (where the bi-invariant metric determined by $-\lambda K$ is also denoted with the same symbol).
\end{proof}
\noindent Proposition \ref{propSKTLieGroup} and Remark \ref{rmkKilling} now imply the following:
\begin{corollary}
    Let $\mathcal{J}:\mathfrak{g}_2\rightarrow \mathfrak{g}_2$ be the (integrable) almost complex structure given by (\ref{eqG2Jone})-(\ref{eqG2Jfive}) and let $K$ be the Killing form. Then $(\mathcal{J},-\lambda K)$ determines a left-invariant SKT structure on $G_2$ for all $\lambda>0$.
\end{corollary}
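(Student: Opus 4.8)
The plan is to obtain the Corollary as an immediate consequence of Proposition \ref{propSKTLieGroup}, so the entire task reduces to checking that the data $(G_2,\mathcal{J},K)$ satisfies the hypotheses of that proposition. First I would record the structural facts: $G_2$ is a compact simple, hence semi-simple, Lie group, so the hypothesis on $G$ in Proposition \ref{propSKTLieGroup} is met; moreover, because $G_2$ is simple, every bi-invariant metric is $-\lambda K$ for some $\lambda>0$, which is exactly the one-parameter family appearing in the statement. Thus the conclusion I am after is precisely the output of Proposition \ref{propSKTLieGroup} applied to $\mathcal{J}$, provided $\mathcal{J}$ is a left-invariant integrable almost complex structure compatible with $K$.

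The substantive step, therefore, is to verify that the explicit $\mathcal{J}$ of (\ref{eqG2Jone})--(\ref{eqG2Jfive}) is compatible with the Killing form, i.e. $K(\mathcal{J}X,\mathcal{J}Y)=K(X,Y)$ for all $X,Y\in\mathfrak{g}_2$. Integrability and $\mathcal{J}^2=-\mathrm{id}$ are guaranteed by the Samelson construction reviewed in Section \ref{secHermitianG2}. For compatibility I would reuse the argument of Lemma \ref{lemSamelsonKilling}: for \emph{any} Samelson complex structure, the decomposition properties of $K$ with respect to the root spaces force $K(\mathcal{J}X,\mathcal{J}Y)=K(X,Y)$ automatically whenever $X$ or $Y$ lies in some $\mathfrak{g}_{\alpha_k}$ or $\overline{\mathfrak{g}_{\alpha_k}}$, since $K$ pairs $\mathfrak{g}_\alpha$ nontrivially only against $\mathfrak{g}_{-\alpha}$ and the $\pm i$ eigenvalues of $\mathcal{J}$ on conjugate root spaces multiply to $1$ (while cross terms vanish). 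The only place where compatibility is not automatic is the restriction of $K$ to $\mathfrak{t}=\mathrm{span}_{\mathbb{R}}\{b_1,b_{12}\}$, and this is exactly what (\ref{eqKJcompatCartan}) records for the specific choice (\ref{eqJ'G2a}) of $\mathcal{J}|_{\mathfrak{t}}$. Combining the two cases yields $K(\mathcal{J}\cdot,\mathcal{J}\cdot)=K(\cdot,\cdot)$ on all of $\mathfrak{g}_2$.

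With both hypotheses of Proposition \ref{propSKTLieGroup} now verified, that proposition applies verbatim and gives that $(\mathcal{J},-\lambda K)$ is a left-invariant SKT structure on $G_2$ for every $\lambda>0$, which is the claim. The only genuinely computational point---and hence the main obstacle, albeit a mild one---is the verification underlying (\ref{eqKJcompatCartan}), namely that the explicit $2\times 2$ matrix of $\mathcal{J}|_{\mathfrak{t}}$ in (\ref{eqJ'G2a}) is orthogonal with respect to $K|_{\mathfrak{t}\times\mathfrak{t}}$; once the values $K(b_1,b_1)$, $K(b_1,b_{12})$, $K(b_{12},b_{12})$ are in hand, this is a direct check, and everything else follows formally from the results already established.
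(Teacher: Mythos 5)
Your proposal is correct and follows essentially the same route as the paper: the paper derives this corollary directly from Proposition \ref{propSKTLieGroup} together with Remark \ref{rmkKilling}, where compatibility of $\mathcal{J}$ with $K$ is obtained exactly as you describe---automatically on the root spaces via the argument of Lemma \ref{lemSamelsonKilling}, and on $\mathfrak{t}$ by the direct check (\ref{eqKJcompatCartan}) for the choice (\ref{eqJ'G2a}). No gaps; your decomposition of the verification matches the paper's.
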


\subsection{A family of SKT-metrics on $G_2$}
Let $U$ be a complex vector space and let $\langle\cdot,\cdot\rangle$ be a Hermitian inner product on $U$.  Our convention for Hermitian inner products is to take $\mathbb{C}$-linearity in the \textit{second} argument so that 
$$
\langle\cdot,au_1+bu_2\rangle=a\langle\cdot,u_1\rangle+b\langle \cdot,u_2\rangle
$$
for $a,b\in \mathbb{C}$, $u_1,u_2\in U$.
Then conjugate symmetry implies 
$$
\langle au_1+bu_2,\cdot\rangle=\bar{a}\langle u_1,\cdot\rangle +\bar{b}\langle u_2,\cdot\rangle,
$$
where $\overline{a},~\overline{b}$ denotes the complex conjugate of $a,b$.  For the sake of completeness, we record the following elementary observation.
\begin{proposition}
    \label{propHermitianInner}
    Let $V$ be a real vector space and ${V}_{\mathbb{C}}:=V+iV$ the complexification of $V$.  Let $g$ be a positive definite inner product on $V$.  Then $\widehat{g}(u,v):=g(\overline{u},v)$ for $u,v\in V_{\mathbb{C}}$ is a Hermitian inner product on $V_{\mathbb{C}}$ (where $g(\overline{u},v)$ is evaluated by extending $g$ by $\mathbb{C}$-linearity in its arguments).  Conversely, if $\widehat{g}$ is any Hermitian inner product such that $\widehat{g}(u,v)\in \mathbb{R}$ for all $u,v\in V\subset V_{\mathbb{C}}$, then $\widehat{g}(u,v)=g(\overline{u},v)$ where $g:=\widehat{g}|_{V\times V}$.
\end{proposition}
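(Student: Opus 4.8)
The plan is to prove both directions by writing an arbitrary element of $V_{\mathbb{C}}$ as $u = u_1 + iu_2$ with $u_1, u_2 \in V$, and reducing every claim to the symmetric $\mathbb{R}$-bilinear form $g$ on $V$. Throughout I will use two elementary facts about the $\mathbb{C}$-bilinear extension of $g$: first, that it remains symmetric, $g(x,y) = g(y,x)$ for all $x, y \in V_{\mathbb{C}}$; and second, that since $g$ is real-valued on $V$, conjugation acts on the extension by $\overline{g(x,y)} = g(\overline{x}, \overline{y})$. Both follow at once by expanding into real and imaginary parts.

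For the forward direction I would check the three defining properties of a Hermitian inner product in the paper's convention ($\mathbb{C}$-linear in the second slot, conjugate symmetric, positive definite). Sesquilinearity is immediate: $\mathbb{C}$-linearity of $\widehat{g}(u, \cdot) = g(\overline{u}, \cdot)$ in the second argument is just $\mathbb{C}$-linearity of the extension of $g$, while conjugate-linearity in the first argument follows because $u \mapsto \overline{u}$ is conjugate-linear and $g$ is linear in its first slot. Conjugate symmetry reduces to the two facts above: $\widehat{g}(v,u) = g(\overline{v}, u) = g(u, \overline{v}) = \overline{g(\overline{u}, v)} = \overline{\widehat{g}(u,v)}$, using symmetry in the second equality and $\overline{g(x,y)} = g(\overline{x},\overline{y})$ in the third. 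Finally, positive definiteness follows by expanding $\widehat{g}(u,u) = g(u_1 - iu_2,\, u_1 + iu_2)$; the symmetry of $g$ kills the imaginary cross terms, leaving $g(u_1, u_1) + g(u_2, u_2)$, which is nonnegative and vanishes only when $u_1 = u_2 = 0$, by positive definiteness of $g$ on $V$.

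For the converse, I would first verify that $g := \widehat{g}|_{V \times V}$ really is a positive definite symmetric $\mathbb{R}$-bilinear form: it is $\mathbb{R}$-bilinear and positive definite by restriction, and symmetric because the reality hypothesis together with conjugate symmetry of $\widehat{g}$ gives $g(b,a) = \widehat{g}(b,a) = \overline{\widehat{g}(a,b)} = \overline{g(a,b)} = g(a,b)$. The identity $\widehat{g}(u,v) = g(\overline{u},v)$ is then a direct computation: expanding both sides with $u = u_1 + iu_2$ and $v = v_1 + iv_2$ --- the left side via conjugate-linearity in the first slot and linearity in the second, the right side via $\mathbb{C}$-bilinearity of the extension of $g$ --- produces the same expression $[g(u_1,v_1) + g(u_2,v_2)] + i[g(u_1,v_2) - g(u_2,v_1)]$ in each case. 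There is no genuine obstacle here; the only thing to watch is the bookkeeping of the paper's convention (which slot is $\mathbb{C}$-linear and which is conjugate-linear) and the distinction between the symmetric $\mathbb{C}$-bilinear extension of $g$ and the Hermitian form $\widehat{g}$, so that one does not accidentally conjugate $g$ where it should remain bilinear.
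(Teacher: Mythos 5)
Your proof is correct and takes essentially the same approach as the paper's: decompose $u=u_1+iu_2$, $v=v_1+iv_2$ and verify the Hermitian axioms by direct expansion, with the converse identity obtained by expanding both sides to the common expression $[g(u_1,v_1)+g(u_2,v_2)]+i[g(u_1,v_2)-g(u_2,v_1)]$. If anything you are slightly more careful than the paper in the converse, where you justify the symmetry of $g:=\widehat{g}|_{V\times V}$ from conjugate symmetry plus the reality hypothesis, a point the paper dismisses as ``clearly.''
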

\begin{proof}
    Suppose $g$ is a positive definite inner product on $V$ and $\widehat{g}(u,v):=g(\overline{u},v)$ for $u,v\in V_{\mathbb{C}}$.  Clearly $\widehat{g}$ is $\mathbb{C}$-linear in its second argument.  Now write $u=u_1+iu_2$, $v=v_1+iv_2$.  Then
    \begin{align*}
        \widehat{g}(u,v)&:=g(u_1-iu_2,v_1+iv_2)\\
            &=g(u_1,v_1)+(-i)(i)g(u_2,v_2)+ig(u_1,v_2)-ig(u_2,v_1)\\
            &=[g(u_1,v_1)+g(u_2,v_2)]+i[g(u_1,v_2)-g(u_2,v_1)].
    \end{align*}
    From this, we see that the symmetry of $g$ implies 
    $$
        \widehat{g}(v,u)=\overline{\widehat{g}(u,v)}
    $$
    and the positive definiteness of $g$ implies $\widehat{g}(u,u)>0$ for all nonzero $u\in V_{\mathbb{C}}$.

    On the other hand, if $\widehat{g}$ is a Hermitian inner product such that $\widehat{g}(u,v)\in \mathbb{R}$ for all $u,v\in V$, then $g:=\widehat{g}|_{V\times V}$ is clearly a positive definite inner product on $V$ and 
    \begin{align*}
        \widehat{g}(u,v)&=\widehat{g}(u_1+iu_2,v_1+iv_2)\\
        &=\widehat{g}(u_1,v_1)+\widehat{g}(u_1,iv_2)+\widehat{g}(iu_2,v_1)+\widehat{g}(iu_2,iv_2)\\
        &=\widehat{g}(u_1,v_1)+i\widehat{g}(u_1,v_2)-i\widehat{g}(u_2,v_1)+(-i)(i)\widehat{g}(u_2,v_2)\\
        &=[g(u_1,v_1)+{g}(u_2,v_2)]+i[{g}(u_1,v_2)-{g}(u_2,v_1)]\\
        &=g(\overline{u},v).
    \end{align*}
\end{proof}
\noindent Let $b_1,\dots, b_{14}$ be the basis of $\mathfrak{g}_2$ given by (\ref{eqG2basis1})-(\ref{eqG2basis5}) and let 
$$
E_{\alpha_1},\dots, E_{\alpha_6}
$$
be given by (\ref{eqEalpha1})-(\ref{eqEalpha6}). Consider the basis on $(\mathfrak{g}_2)_{\mathbb{C}}$ given by
\begin{equation}
    \label{eqComplexBasis}
    H_1,~H_2,~E_{\alpha_1},\dots, E_{\alpha_6},~\overline{E}_{\alpha_1},\dots, \overline{E}_{\alpha_6}
\end{equation}
where 
\begin{equation}
\label{eqH1H2}
H_1:=b_1,~H_2:=-\frac{1}{\sqrt{3}}b_1-\frac{2}{\sqrt{3}}b_{12}
\end{equation}
and let $\widehat{g}$ be the Hermitian inner product on $(\mathfrak{g}_2)_{\mathbb{C}}$ defined as follows: for $X,Y\in (\mathfrak{g}_2)_{\mathbb{C}}$ with
$$
X=X^1H_1+X^{2}H_{2}+\sum_{j=1}^6A^jE_{\alpha_j}+\sum_{j=1}^6B^j\overline{E}_{\alpha_j}
$$
$$
Y=Y^1H_1+Y^{2}H_{2}+\sum_{j=1}^6C^jE_{\alpha_j}+\sum_{j=1}^6D^j\overline{E}_{\alpha_j},
$$
\begin{align}
\label{eqG2ghat}
\widehat{g}(X,Y):=\lambda_0(\overline{X^1}Y^1+\overline{X^{2}}Y^{2})
+\sum_{j=1}^6\lambda_j(\overline{A^j}C^j+\overline{B^j}D^j)
\end{align}
where $\lambda_0,\lambda_1,\dots, \lambda_6\in \mathbb{R}$ are all positive.
\begin{proposition}
    \label{propG2HermitianInner}
    Let $\widehat{g}$ be defined by (\ref{eqG2ghat}).  The nonzero components of $\widehat{g}$ with respect to the basis $b_1,b_2,\dots, b_{14}$ is
    $$
        \widehat{g}(b_1,b_1)=\widehat{g}(b_{12},b_{12})=\lambda_0,~\widehat{g}(b_1,b_{12})=-\frac{1}{2}\lambda_0,
    $$
    $$
        \widehat{g}(b_2,b_2)=\frac{1}{8}(\lambda_2+\lambda_3),~\widehat{g}(b_2,b_{14})=-\frac{1}{4}\lambda_2,
    $$
    $$
    \widehat{g}(b_3,b_3)=\frac{1}{2}\lambda_5,~\widehat{g}(b_3,b_9)=-\frac{1}{4}\lambda_5,
    $$
    $$
    \widehat{g}(b_4,b_4)=\frac{1}{8}(\lambda_2+\lambda_3),~\widehat{g}(b_4,b_{13})=-\frac{1}{4}\lambda_2,
    $$
    $$
    \widehat{g}(b_5,b_5)=\frac{1}{2}\lambda_1,~\widehat{g}(b_5,b_{11})=-\frac{1}{4}\lambda_1,
    $$
    $$
    \widehat{g}(b_6,b_6)=\frac{1}{2}\lambda_1,~\widehat{g}(b_6,b_{10})=\frac{1}{4}\lambda_1,
    $$
    $$
    \widehat{g}(b_7,b_7)=\frac{1}{2}\lambda_5,~\widehat{g}(b_7,b_8)=\frac{1}{4}\lambda_5,
    $$
    $$
    \widehat{g}(b_8,b_8)=\widehat{g}(b_9,b_9)=\frac{1}{8}(\lambda_5+\lambda_6),
    $$
    $$
    \widehat{g}(b_{10},b_{10})=\widehat{g}(b_{11},b_{11})=\frac{1}{8}(\lambda_1+\lambda_4),
    $$
    $$
    \widehat{g}(b_{13},b_{13})=\widehat{g}(b_{14},b_{14})=\frac{1}{2}\lambda_2.
    $$
    In particular, $g:=\widehat{g}|_{\mathfrak{g}_2\times \mathfrak{g}_2}$ is a positive definite inner product on $\mathfrak{g}_2$ and $\widehat{g}(X,Y)=g(\overline{X},Y)$ for all $X,Y\in (\mathfrak{g}_2)_{\mathbb{C}}$.
\end{proposition}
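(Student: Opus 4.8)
The plan is to exploit the fact that $\widehat{g}$ is, by its very definition (\ref{eqG2ghat}), diagonal with respect to the complex basis (\ref{eqComplexBasis}): the vectors $H_1, H_2, E_{\alpha_1}, \dots, E_{\alpha_6}, \overline{E}_{\alpha_1}, \dots, \overline{E}_{\alpha_6}$ are pairwise $\widehat{g}$-orthogonal, with $\widehat{g}(H_1,H_1) = \widehat{g}(H_2,H_2) = \lambda_0$ and $\widehat{g}(E_{\alpha_j}, E_{\alpha_j}) = \widehat{g}(\overline{E}_{\alpha_j}, \overline{E}_{\alpha_j}) = \lambda_j$. So once each real basis vector $b_i$ is rewritten as a complex-linear combination of the vectors in (\ref{eqComplexBasis}), every inner product $\widehat{g}(b_i, b_j)$ collapses to a short sum over the shared basis vectors, evaluated by conjugate-linearity in the first slot.

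First I would invert the defining relations (\ref{eqH1H2}) and (\ref{eqEalpha1})--(\ref{eqEalpha6}) together with their conjugates. From (\ref{eqH1H2}) one gets $b_1 = H_1$ and $b_{12} = -\frac{1}{2}H_1 - \frac{\sqrt{3}}{2}H_2$. Taking real and imaginary parts of $E_{\alpha_j}$ and $\overline{E}_{\alpha_j}$, I would solve for each $b_i$; for instance $E_{\alpha_1} = b_5 + ib_6$ yields $b_5 = \frac{1}{2}(E_{\alpha_1} + \overline{E}_{\alpha_1})$ and $b_6 = -\frac{i}{2}(E_{\alpha_1} - \overline{E}_{\alpha_1})$. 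The coupled vectors require back-substitution: $b_{10}, b_{11}$ come from $E_{\alpha_4}$ after inserting the expressions for $b_5, b_6$; likewise $b_2, b_4$ come from $E_{\alpha_3}$ after inserting $b_{13}, b_{14}$ obtained from $E_{\alpha_2}$, and $b_8, b_9$ come from $E_{\alpha_6}$ after inserting $b_3, b_7$ obtained from $E_{\alpha_5}$.

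A structural observation streamlines the computation: the fourteen real basis vectors split into four blocks --- $\{b_1, b_{12}\}$ supported on $\{H_1, H_2\}$, $\{b_5, b_6, b_{10}, b_{11}\}$ supported on the $\alpha_1, \alpha_4$ vectors, $\{b_2, b_4, b_{13}, b_{14}\}$ supported on the $\alpha_2, \alpha_3$ vectors, and $\{b_3, b_7, b_8, b_9\}$ supported on the $\alpha_5, \alpha_6$ vectors. Since distinct blocks are expressed in disjoint subsets of the orthogonal family (\ref{eqComplexBasis}), the inner product of two $b_i$ lying in different blocks vanishes automatically, which accounts at once for all the unlisted (zero) entries. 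Within each block I would simply carry out the short sesquilinear sums, which reproduce the stated values and give zero for the remaining intra-block pairs such as $\widehat{g}(b_5, b_6)$.

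Finally, since every entry $\widehat{g}(b_i, b_j)$ produced above is real, the restriction $g := \widehat{g}|_{\mathfrak{g}_2 \times \mathfrak{g}_2}$ takes real values on the real basis and hence on all of $\mathfrak{g}_2$; positive definiteness is inherited from $\widehat{g}$. The identity $\widehat{g}(X,Y) = g(\overline{X}, Y)$ then follows immediately from the converse direction of Proposition \ref{propHermitianInner}. I expect the main obstacle to be organizational rather than conceptual: the three coupled blocks demand careful back-substitution to untangle $b_{10}, b_{11}$, then $b_2, b_4$, and $b_8, b_9$ from the roots, and one must track the factors of $i$ and the conjugate-linearity convention consistently so that the cross terms cancel in precisely the right way.
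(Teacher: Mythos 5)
Your proposal is correct and follows essentially the same route as the paper: invert the defining relations to write each $b_i$ as a combination of $H_1,H_2,E_{\alpha_j},\overline{E}_{\alpha_j}$, evaluate $\widehat{g}$ using its diagonal form on that basis, and deduce the final statement from the real-valuedness of the entries via Proposition \ref{propHermitianInner}. Your block decomposition into $\{b_1,b_{12}\}$, $\{b_5,b_6,b_{10},b_{11}\}$, $\{b_2,b_4,b_{13},b_{14}\}$, $\{b_3,b_7,b_8,b_9\}$ is a nice organizational refinement that explains the vanishing cross terms at a glance, but it is not a different method.
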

\begin{proof}
    The basis $b_1,b_2,\dots, b_{14}$ expressed in terms of the basis 
    $$
        H_1,~H_{2},~E_{\alpha_1},\dots, E_{\alpha_6},~\overline{E}_{\alpha_1},\dots, \overline{E}_{\alpha_6}
    $$
    is given by 
    $$
        b_1=H_1,~\hspace*{0.1in}b_{12}=-\frac{1}{2}H_1-\frac{\sqrt{3}}{2}H_2,
    $$
    $$
        b_2=\frac{1}{4}(E_{\alpha_3}+\overline{E}_{\alpha_3})-\frac{1}{4}(E_{\alpha_2}+\overline{E}_{\alpha_2}),
    $$
    $$
        b_3=\frac{1}{2}(E_{\alpha_5}+\overline{E}_{\alpha_5}),
    $$
    $$
    b_4=-\frac{i}{4}(E_{\alpha_3}-\overline{E}_{\alpha_3})+\frac{i}{4}(E_{\alpha_2}-\overline{E}_{\alpha_2}),
    $$
    $$
    b_5=\frac{1}{2}(E_{\alpha_1}+\overline{E}_{\alpha_1}),
    $$
    $$
    b_6=-\frac{i}{2}(E_{\alpha_1}-\overline{E}_{\alpha_1}),
    $$
    $$
    b_7=-\frac{i}{2}(E_{\alpha_5}-\overline{E}_{\alpha_5}),
    $$
    $$
    b_8=-\frac{1}{4}(E_{\alpha_6}+\overline{E}_{\alpha_6})-\frac{i}{4}(E_{\alpha_5}-\overline{E}_{\alpha_5}),
    $$
    $$
     b_9=-\frac{1}{4}(E_{\alpha_5}+\overline{E}_{\alpha_5})-\frac{i}{4}(E_{\alpha_6}-\overline{E}_{\alpha_6}),
    $$
    $$
    b_{10}=\frac{i}{4}(E_{\alpha_4}-\overline{E}_{\alpha_4})-\frac{i}{4}(E_{\alpha_1}-\overline{E}_{\alpha_1}),
    $$
    $$
    b_{11}=-\frac{1}{4}(E_{\alpha_4}+\overline{E}_{\alpha_4})-\frac{1}{4}(E_{\alpha_1}+\overline{E}_{\alpha_1}),
    $$
    $$
    b_{13}=-\frac{i}{2}(E_{\alpha_2}-\overline{E}_{\alpha_2}),
    $$
    $$
    b_{14}=\frac{1}{2}(E_{\alpha_2}+\overline{E}_{\alpha_2}).
    $$
    The formula for the components of $\widehat{g}$ with respect to $b_1,\dots, b_{14}$ now follow from the above expressions and the definition of $\widehat{g}$. Since $\widehat{g}(b_i,b_j)\in \mathbb{R}$ for all $1\le i,j\le 14$, it follows that $\widehat{g}(X,Y)\in \mathbb{R}$ for all $X,Y\in \mathfrak{g}_2$.  The final statement now follows from Proposition \ref{propHermitianInner}.
\end{proof}
\begin{proposition}
    \label{propG2HermitianMetric}
    Let $\mathcal{J}:\mathfrak{g}_2\rightarrow \mathfrak{g}_2$ be the linear map defined by the Samelson construction given by (\ref{eqG2Jone})-(\ref{eqG2Jfive}) and let $g$ be the positive definite inner product on $\mathfrak{g}_2$ induced from the Hermitian inner product $\widehat{g}$ in  Proposition \ref{propG2HermitianInner}.  Then $g(\mathcal{J}\cdot,\mathcal{J}\cdot)=g(\cdot,\cdot)$.  In particular, $(g,\mathcal{J})$ determines a left-invariant Hermitian structure on $G_2$ (where the Hermitian metric induced by $g$ depends on seven positive parameters $\lambda_j>0$, $j=0,1,\dots, 6$).
\end{proposition}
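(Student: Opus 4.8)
The plan is to prove the compatibility at the level of the complexified Hermitian form $\widehat{g}$, rather than grinding through the $14\times 14$ real components. By Proposition \ref{propHermitianInner} (applied as in Proposition \ref{propG2HermitianInner}) we have $\widehat{g}(X,Y)=g(\overline{X},Y)$ for all $X,Y\in(\mathfrak{g}_2)_{\mathbb{C}}$, and by the Samelson construction $\mathcal{J}=\mathcal{J}'|_{\mathfrak{g}_2}$ with $\mathcal{J}'\mathfrak{g}_2\subset\mathfrak{g}_2$. Hence, for real $X,Y\in\mathfrak{g}_2$ the vectors $\mathcal{J}X,\mathcal{J}Y$ are again real, so $\overline{\mathcal{J}X}=\mathcal{J}X$ and $g(\mathcal{J}X,\mathcal{J}Y)=\widehat{g}(\mathcal{J}X,\mathcal{J}Y)$, while $g(X,Y)=\widehat{g}(X,Y)$. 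Thus it suffices to establish the complex identity
$$
\widehat{g}(\mathcal{J}'X,\mathcal{J}'Y)=\widehat{g}(X,Y),\qquad \forall\,X,Y\in(\mathfrak{g}_2)_{\mathbb{C}}.
$$

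Second, I would exploit the fact that $\widehat{g}$ is, by its very definition (\ref{eqG2ghat}), diagonal and orthogonal with respect to the basis (\ref{eqComplexBasis}): the only nonzero pairings are $\widehat{g}(H_1,H_1)=\widehat{g}(H_2,H_2)=\lambda_0$ and $\widehat{g}(E_{\alpha_j},E_{\alpha_j})=\widehat{g}(\overline{E}_{\alpha_j},\overline{E}_{\alpha_j})=\lambda_j$. On this basis $\mathcal{J}'$ acts very simply: $\mathcal{J}'E_{\alpha_j}=iE_{\alpha_j}$ and $\mathcal{J}'\overline{E}_{\alpha_j}=-i\overline{E}_{\alpha_j}$ are eigenvectors, while on the Cartan part (\ref{eqJ'G2a}) together with (\ref{eqH1H2}) give $\mathcal{J}'H_1=H_2$ and $\mathcal{J}'H_2=-H_1$. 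I would then check $\widehat{g}(\mathcal{J}'v,\mathcal{J}'w)=\widehat{g}(v,w)$ on each pair of basis vectors $v,w$. On the root vectors the eigenvalue factors cancel precisely because $\widehat{g}$ is conjugate-linear in the first slot, e.g. $\widehat{g}(iE_{\alpha_j},iE_{\alpha_j})=\overline{i}\,i\,\lambda_j=\lambda_j$ and $\widehat{g}(-i\overline{E}_{\alpha_j},-i\overline{E}_{\alpha_j})=\overline{(-i)}(-i)\lambda_j=\lambda_j$; on the Cartan block $\mathcal{J}'$ rotates the pair $H_1,H_2$, which are $\widehat{g}$-orthogonal and of equal norm, so the norms and the vanishing cross term are preserved; and all mixed pairings vanish on both sides.

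Finally, I would upgrade agreement on basis vectors to agreement on all of $(\mathfrak{g}_2)_{\mathbb{C}}$ by a sesquilinearity argument: since $\mathcal{J}'$ is $\mathbb{C}$-linear, the form $F(X,Y):=\widehat{g}(\mathcal{J}'X,\mathcal{J}'Y)$ is conjugate-linear in $X$ and linear in $Y$, exactly like $\widehat{g}$, and two sesquilinear forms that coincide on a basis coincide identically. Restricting to $\mathfrak{g}_2$ as in the first paragraph then yields $g(\mathcal{J}\cdot,\mathcal{J}\cdot)=g(\cdot,\cdot)$; and since $g$ is positive definite (Proposition \ref{propG2HermitianInner}) and $\mathcal{J}^2=-\mathrm{id}$, the pair $(g,\mathcal{J})$ is a Hermitian structure on $\mathfrak{g}_2$ which extends by left translation to a left-invariant Hermitian structure on $G_2$.

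I do not expect a genuine obstacle here; the only point demanding care is the Hermitian bookkeeping — namely that conjugate-linearity in the first argument sends the eigenvalue $i$ to $\overline{i}=-i$, so that the factors $\overline{i}\,i$ and $\overline{(-i)}(-i)$ both equal $1$ — together with verifying that the off-diagonal pairings such as $\widehat{g}(E_{\alpha_j},\overline{E}_{\alpha_k})$ (and the Cartan–root pairings) vanish on both sides. A brute-force alternative using the explicit real components of $\widehat{g}$ from Proposition \ref{propG2HermitianInner} and the formulas (\ref{eqG2Jone})--(\ref{eqG2Jfive}) for $\mathcal{J}$ would also work but is far more tedious.
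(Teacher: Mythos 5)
Your proposal is correct and takes essentially the same approach as the paper: the paper's proof likewise passes to the $\mathbb{C}$-linear extension $\mathcal{J}'$ and verifies $\widehat{g}(\mathcal{J}'\cdot,\mathcal{J}'\cdot)=\widehat{g}(\cdot,\cdot)$ on the basis $H_1,H_2,E_{\alpha_j},\overline{E}_{\alpha_j}$, using $\mathcal{J}'H_1=H_2$, $\mathcal{J}'H_2=-H_1$ and the cancellation of the eigenvalue factors $\overline{(\pm i)}(\pm i)=1$ coming from conjugate-linearity in the first slot. The only difference is cosmetic: you spell out the sesquilinearity argument for extending from basis pairs to all of $(\mathfrak{g}_2)_{\mathbb{C}}$ and the restriction back to real vectors via $\widehat{g}(X,Y)=g(\overline{X},Y)$, steps the paper treats as immediate.
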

\begin{proof}
    Let $\mathcal{J}': (\mathfrak{g}_2)_{\mathbb{C}}\rightarrow (\mathfrak{g}_2)_{\mathbb{C}}$ be the $\mathbb{C}$-linear extension of $\mathcal{J}$. To prove the statement, it suffices to check that 
    $$
        \widehat{g}(\mathcal{J}'\cdot,\mathcal{J}'\cdot)=\widehat{g}(\cdot,\cdot),
    $$
    on the basis (\ref{eqComplexBasis}) since $\widehat{g}(X,Y)=g(\overline{X},Y)$ for $X,Y\in (\mathfrak{g}_2)_{\mathbb{C}}$.  Note that 
    $$
        \mathcal{J}'H_1 = H_2,\hspace*{0.1in}\mathcal{J}'H_2 = -H_1.
    $$
    From the definition of $\widehat{g}$, we have
    $$
        \widehat{g}(\mathcal{J}'H_1,\mathcal{J}'H_{1})=\widehat{g}(H_{2},H_{2})=\lambda_0=\widehat{g}(H_1,H_1),
    $$
    $$
    \widehat{g}(\mathcal{J}'H_{2},\mathcal{J}'H_{2})=\widehat{g}(-H_1,-H_1)=\lambda_0=\widehat{g}(H_{2},H_{2}),
    $$
    $$
    \widehat{g}(\mathcal{J}'H_1,\mathcal{J}'H_{2})=\widehat{g}(H_{2},-H_1)=0=\widehat{g}(H_1,H_{2}).
    $$
    Since $\mathcal{J}$ is the Samelson construction associated to the root space (\ref{eqRootSpaceG2}), we have
    $$
        \mathcal{J}'E_{\alpha_j}=iE_{\alpha_j},~\mathcal{J}'\overline{E}_{\alpha_j}=-i\overline{E}_{\alpha_j}.
    $$
    Then 
    $$
     \widehat{g}(\mathcal{J}'H_1,\mathcal{J}'E_{\alpha_j})=i\widehat{g}(H_{2},E_{\alpha_j})=0=\widehat{g}(H_1,E_{\alpha_j}),
    $$
    $$
     \widehat{g}(\mathcal{J}'H_{2},\mathcal{J}'E_{\alpha_j})=-i\widehat{g}(H_{1},E_{\alpha_j})=0=\widehat{g}(H_{2},E_{\alpha_j}).
    $$
    Likewise, one finds
    $$
     \widehat{g}(\mathcal{J}'H_1,\mathcal{J}'\overline{E}_{\alpha_j})= \widehat{g}(H_1,\overline{E}_{\alpha_j}),
    $$
    $$
     \widehat{g}(\mathcal{J}'H_{2},\mathcal{J}'\overline{E}_{\alpha_j})= \widehat{g}(H_{2},\overline{E}_{\alpha_j}).
    $$
    Also,
    $$
        \widehat{g}(\mathcal{J}'E_{\alpha_j},\mathcal{J}'\overline{E}_{\alpha_k})=(-i)(-i)\widehat{g}(E_{\alpha_j},\overline{E}_{\alpha_k})=0=\widehat{g}(E_{\alpha_j},\overline{E}_{\alpha_k}),
    $$
    $$
    \widehat{g}(\mathcal{J}'E_{\alpha_j},\mathcal{J}'E_{\alpha_k})=(-i)(i)\widehat{g}(E_{\alpha_j},E_{\alpha_k})=\widehat{g}(E_{\alpha_j},E_{\alpha_k}),
    $$
    $$
    \widehat{g}(\mathcal{J}'\overline{E}_{\alpha_j},\mathcal{J}'\overline{E}_{\alpha_k})=(i)(-i)\widehat{g}(\overline{E}_{\alpha_j},\overline{E}_{\alpha_k})=\widehat{g}(\overline{E}_{\alpha_j},\overline{E}_{\alpha_k}).
    $$
\end{proof}
\begin{remark}
    For the rest of the paper, we will make no distinction between the left-invariant Hermitian structure on $G_2$ induced by the pair $(g,\mathcal{J})$ from Proposition \ref{propG2HermitianMetric} and the pair $(g,\mathcal{J})$ itself.  Hence, we will call $g$ the Hermitian metric and $\mathcal{J}$ the (integrable) almost complex structure.  Also, we will use the same symbol $\mathcal{J}$ to denote the $\mathbb{C}$-linear extension of $\mathcal{J}:\mathfrak{g}_2\rightarrow \mathfrak{g}_2$ as opposed to writing $\mathcal{J}'$ for the $\mathbb{C}$-linear extension.  Lastly, we identify $\wedge^k(\mathfrak{g}_2)_{\mathbb{C}}^\ast$ with the space of (complex-valued) left-invariant $k$-forms on $G_2$.
\end{remark}
 Let $(g,\mathcal{J})$ be the Hermitian structure from Proposition \ref{propG2HermitianMetric} and let
 $$
 \omega(\cdot,\cdot):=g(\mathcal{J}\cdot,\cdot)
 $$ 
 be the associated fundamental form.  Let
 \begin{equation}
     \label{eqComplexBasisDual}
     H_1^\ast,~H_2^\ast,~E^\ast_{1},\dots,E^\ast_{6},~\overline{E}^\ast_{1},\dots,\overline{E}^\ast_{6}
 \end{equation}
 be the dual basis of (\ref{eqComplexBasis}) (where we write $E^\ast_j$ and $\overline{E}_j^\ast$ instead of $E^\ast_{\alpha_j}$ and $\overline{E}_{\alpha_j}^\ast$ respectively to further simplify the notation).  Note that 
 $$
 \omega(\overline{X},Y) = g(\mathcal{J}\overline{X},Y)=g(\overline{\mathcal{J}X},Y)=\widehat{g}(
 \mathcal{J}X,Y). 
 $$
 The definition of $\widehat{g}$ then implies 
 \begin{equation}
     \label{eqOmegaComplexDualBasis}
     \omega = \lambda_0H_1^\ast\wedge H_2^\ast + i\sum_{j=1}^6 \lambda_jE_{j}^\ast\wedge\overline{E}_{j}^\ast.
 \end{equation}
 For $\beta\in \wedge^k(\mathfrak{g}_2)^\ast_{\mathbb{C}}$, let 
 $$
 \mathcal{J}\beta:= \beta(\mathcal{J}\cdot,\mathcal{J}\cdot,\cdots,\mathcal{J}\cdot).
 $$
Note that if $\beta=\theta_1\wedge \cdots \wedge\theta_k$ for $\theta_j\in (\mathfrak{g}_2)^\ast_{\mathbb{C}}$, then 
$$
\mathcal{J}\beta = (\mathcal{J}\theta_1)\wedge \cdots\wedge (\mathcal{J}\theta_k).
$$
We now focus on calculating the torsion of the Bismut connection, namely $c:=\mathcal{J}d\omega$ as well as its exterior derivative $dc$.  Note that 
\begin{align}
    \label{eqJHH}
    \mathcal{J}H_1^\ast &= -H_2^\ast,~\hspace*{0.2in}\mathcal{J}H_2^\ast = H_1^\ast,\\
    \label{eqJEEb}
    \mathcal{J}E^\ast_j &=iE^\ast_j,\hspace*{0.2in}\mathcal{J}\overline{E}^\ast_j=-i\overline{E}^\ast_j.
\end{align}

For $\theta\in (\mathfrak{g}_2)^\ast_{\mathbb{C}}$, we have
\begin{equation}
    \label{eqOneFormExterior}
    (d\theta)(X,Y)=-\theta([X,Y]),~\hspace*{0.1in}\forall~X,Y\in (\mathfrak{g}_2)_{\mathbb{C}}.
\end{equation}
To calculate $c$ and $dc$, we first calculate the exterior derivative of the basis elements (\ref{eqComplexBasisDual}).  Using (\ref{eqOneFormExterior}) and Appendix \ref{AppendixG2Complex}, one obtains the following:
\begin{align}
 \nonumber
dH_1^\ast &=-2iE_1^\ast\wedge\overline{E}_1^\ast-2iE_2^\ast\wedge \overline{E}^\ast_2-6iE_3^\ast \wedge\overline{E}_3^\ast \\
\label{eqdH1}
&-6iE_4^\ast\wedge \overline{E}^\ast_4-4iE^\ast_5\wedge \overline{E}^\ast_5,
 \end{align}
\begin{align}
    \nonumber
    dH_{2}^\ast&=2i\sqrt{3} E_1^\ast\wedge \overline{E}^\ast_1-2i\sqrt{3}E_2^\ast\wedge \overline{E}^\ast_2+2i\sqrt{3}  E^\ast_3\wedge \overline{E}^\ast_3\\
    \label{eqdH2}
    &-2i\sqrt{3}E_4^\ast\wedge \overline{E}^\ast_4-4i\sqrt{3}E^\ast_6\wedge \overline{E}^\ast_6,
\end{align}
\begin{align}
\label{eqdE1}
dE^\ast_{1}&=-iH_1^\ast\wedge E^\ast_{1}+i\sqrt{3}H_{2}^\ast\wedge E^\ast_{1}+6iE^\ast_{3}\wedge\overline{E}_{6}^\ast+2E^\ast_{5}\wedge\overline{E}_{2}^\ast,
\end{align}
\begin{align}
\label{eqdE2}
dE^\ast_{2}=-iH_1^\ast\wedge E^\ast_{2}-i\sqrt{3}H^\ast_{2}\wedge E^\ast_{2}+6iE^\ast_{4}\wedge E^\ast_{6}-2E^\ast_{5}\wedge \overline{E}^\ast_{1},
\end{align}
\begin{align}
    \nonumber
    dE^\ast_{3}&=-iH_1^\ast \wedge E^\ast_{3}+\frac{i}{\sqrt{3}}H_2^\ast\wedge E^\ast_3+2iE^\ast_{1}\wedge E^\ast_{6}\\
    \label{eqdE3}
    &-4iE^\ast_{4}\wedge\overline{E}^\ast_{6}-2E^\ast_{5}\wedge\overline{E}^\ast_{4},
\end{align}
\begin{align}
    \nonumber
    dE^\ast_{4}&=-iH_1^\ast\wedge E^\ast_{4}-\frac{i}{\sqrt{3}}H_{2}^\ast \wedge E^\ast_{4}+2iE^\ast_{2}\wedge \overline{E}^\ast_{6}\\
    \label{eqdE4}
    &-4iE^\ast_{3}\wedge E^\ast_{6}+2E^\ast_{5}\wedge \overline{E}^\ast_{3},
\end{align}
\begin{align}
    \label{eqdE5}
    dE^\ast_{5}=-2iH_1^\ast\wedge E^\ast_{5}-2E^\ast_{1}\wedge E^\ast_{2}+6E^\ast_{3}\wedge E^\ast_{4},
\end{align}
\begin{align}
    \label{eqdE6}
    dE^\ast_{6}=-\frac{2i}{\sqrt{3}}H^\ast_{2}\wedge E^\ast_{6}-2iE^\ast_{2}\wedge \overline{E}^\ast_{4}-2iE^\ast_{3}\wedge\overline{E}^\ast_{1}+4iE^\ast_{4}\wedge \overline{E}_{3}^\ast,
\end{align}
\begin{align}
\label{eqdE1bar}
    d\overline{E}^\ast_{1}=iH_1^\ast\wedge \overline{E}^\ast_{1}-i\sqrt{3}H_{2}^\ast \wedge \overline{E}^\ast_{1}-2E^\ast_{2}\wedge \overline{E}^\ast_{5}+6iE^\ast_{6}\wedge \overline{E}^\ast_{3},
\end{align}
\begin{align}
\label{eqdE2bar}
d\overline{E}^\ast_{2}=iH^\ast_1\wedge \overline{E}^\ast_{2}+i\sqrt{3}H^\ast_{2}\wedge \overline{E}^\ast_{2}+2E^\ast_{1}\wedge \overline{E}^\ast_{5}-6i\overline{E}^\ast_{4}\wedge \overline{E}^\ast_{6},
\end{align}
\begin{align}
    \nonumber
    d\overline{E}^\ast_{3}&=iH^\ast_1\wedge\overline{E}^\ast_{3}-\frac{i}{\sqrt{3}}H_2^\ast\wedge \overline{E}^\ast_3+2E^\ast_{4}\wedge \overline{E}^\ast_{5}\\
    \label{eqdE3bar}
    &-4iE^\ast_{6}\wedge \overline{E}^\ast_{4}-2i\overline{E}^\ast_{1}\wedge\overline{E}^\ast_{6},
\end{align}
\begin{align} 
    \nonumber
    d\overline{E}^\ast_{4}&=iH^\ast_1\wedge \overline{E}^\ast_{4}+\frac{i}{\sqrt{3}}H^\ast_2\wedge \overline{E}^\ast_{4}-2E^\ast_{3}\wedge \overline{E}^\ast_{5}\\
    \label{eqdE4bar}
    &+2iE^\ast_{6}\wedge \overline{E}^\ast_{2}+4i\overline{E}^\ast_{3}\wedge \overline{E}^\ast_{6},
\end{align}
\begin{align}
    \label{eqdE5bar}
    d\overline{E}^\ast_{5}=2iH_1^\ast\wedge\overline{E}^\ast_{5}-2\overline{E}^\ast_{1}\wedge \overline{E}^\ast_{2}+6\overline{E}^\ast_{3}\wedge \overline{E}^\ast_{4},
\end{align}
\begin{align}
    \label{eqdE6bar}
    d\overline{E}^\ast_{6}=\frac{2i}{\sqrt{3}}H^\ast_2\wedge \overline{E}^\ast_{6}-2iE^\ast_{1}\wedge \overline{E}^\ast_{3}+4iE^\ast_{3}\wedge \overline{E}^\ast_{4}-2iE^\ast_{4}\wedge \overline{E}^\ast_{2}.
\end{align}
Using (\ref{eqOmegaComplexDualBasis}), (\ref{eqJHH})-(\ref{eqJEEb}), and (\ref{eqdH1})-(\ref{eqdE6bar}), one obtains the expressions for $c:=\mathcal{J}d\omega$ and $dc$.  (This is a straightforward, but exceedingly lengthy calculation whose step by step details we omit.)
\allowdisplaybreaks
\begin{align}
\nonumber
c=&-2i\lambda_0 H_1^\ast\wedge E^\ast_1\wedge\overline{E}^\ast_1-2i\lambda_0 H_1^\ast \wedge E^\ast_2\wedge\overline{E}^\ast_2-6i\lambda_0H_1^\ast\wedge E^\ast_3\wedge \overline{E}^\ast_3\\
\nonumber
&-6i\lambda_0 H^\ast_1\wedge E^\ast_4\wedge \overline{E}^\ast_4-4i\lambda_0 H_1^\ast\wedge E^\ast_5\wedge \overline{E}^\ast_5+i2\sqrt{3}\lambda_0H_2^\ast\wedge E^\ast_1\wedge\overline{E}^\ast_1\\
\nonumber
&-i2\sqrt{3}\lambda_0 H_2^\ast\wedge E^\ast_2\wedge \overline{E}^\ast_2+i2\sqrt{3}\lambda_0 H^\ast_2\wedge E^\ast_3\wedge \overline{E}^\ast_3\\
\nonumber
&-i2\sqrt{3}\lambda_0 H_2^\ast\wedge E^\ast_4\wedge \overline{E}^\ast_4-i4\sqrt{3}\lambda_0 H_2^\ast\wedge E^\ast_6\wedge \overline{E}^\ast_6\\
\nonumber
&+(-6i\lambda_1+2i\lambda_3-2i\lambda_6)E^\ast_3\wedge\overline{E}^\ast_1\wedge\overline{E}^\ast_6\\
\nonumber
&+(-2\lambda_1-2\lambda_2+2\lambda_5)E^\ast_5\wedge \overline{E}^\ast_1\wedge \overline{E}^\ast_2\\
\nonumber
&+(-2\lambda_1-2\lambda_2+2\lambda_5)E^\ast_1\wedge E^\ast_2\wedge \overline{E}^\ast_5\\
\nonumber
&+(6i\lambda_1-2i\lambda_3+2i\lambda_6)E^\ast_1\wedge E^\ast_6\wedge \overline{E}^\ast_3\\
\nonumber
&+(-6i\lambda_2+2i\lambda_4+2i\lambda_6)E^\ast_4\wedge E^\ast_6\wedge \overline{E}^\ast_2\\
\nonumber
&+(6i\lambda_2-2i\lambda_4-2i\lambda_6)E^\ast_2\wedge\overline{E}^\ast_4\wedge\overline{E}^\ast_6\\
\nonumber
&+(4i\lambda_3-4i\lambda_4+4i\lambda_6)E^\ast_4\wedge \overline{E}^\ast_3\wedge \overline{E}^\ast_6\\
\nonumber
&+(2\lambda_3+2\lambda_4-6\lambda_5)E^\ast_5\wedge\overline{E}^\ast_3\wedge\overline{E}^\ast_4\\
\nonumber
&+(2\lambda_3+2\lambda_4-6\lambda_5)E^\ast_3\wedge E^\ast_4\wedge \overline{E}^\ast_5\\
\label{eqG2cExpression}
&+(-4i\lambda_3+4i\lambda_4-4i\lambda_6)E^\ast_3\wedge E^\ast_6\wedge\overline{E}^\ast_4
\end{align}
\allowdisplaybreaks
\begin{align}
    \nonumber
dc&=(-16\lambda_0+8\lambda_1+8\lambda_2-8\lambda_5)E^\ast_1\wedge E^\ast_2\wedge \overline{E}^\ast_1\wedge \overline{E}^\ast_2\\
\nonumber
&+(48\lambda_0-24\lambda_1+8\lambda_3-8\lambda_6)E^\ast_1\wedge E^\ast_3\wedge \overline{E}^\ast_1\wedge \overline{E}^\ast_3\\
\nonumber
&+(16\lambda_0-8\lambda_1-8\lambda_2+8\lambda_5)E^\ast_1\wedge E^\ast_5\wedge \overline{E}^\ast_1\wedge \overline{E}^\ast_5\\
\nonumber
&+(48\lambda_0+24\lambda_2-8\lambda_4-8\lambda_6)E^\ast_2\wedge E^\ast_4\wedge \overline{E}^\ast_2\wedge \overline{E}^\ast_4\\
\nonumber
&+(16\lambda_0-8\lambda_1-8\lambda_2+8\lambda_5)E^\ast_2\wedge E^\ast_5\wedge \overline{E}^\ast_2\wedge \overline{E}^\ast_5\\
\nonumber
&+(48\lambda_0-8\lambda_3+56\lambda_4-72\lambda_5-32\lambda_6)E^\ast_3\wedge E^\ast_4\wedge\overline{E}^\ast_3\wedge \overline{E}^\ast_{4}\\
\nonumber
&+(48\lambda_0-8\lambda_3-8\lambda_4+24\lambda_5)E^\ast_3\wedge E^\ast_5\wedge \overline{E}^\ast_3\wedge \overline{E}^\ast_5\\
\nonumber
&+(48\lambda_0-8\lambda_3-8\lambda_4+24\lambda_5)E^\ast_4\wedge E^\ast_5\wedge \overline{E}^\ast_4\wedge \overline{E}^\ast_5\\
\nonumber
&+(-48\lambda_0+24\lambda_1-8\lambda_3+8\lambda_6)E^\ast_1\wedge E^\ast_6\wedge \overline{E}^\ast_1\wedge \overline{E}^\ast_6\\
\nonumber
&+(48\lambda_0+24\lambda_2-8\lambda_4-8\lambda_6)E^\ast_2\wedge E^\ast_6\wedge \overline{E}^\ast_2\wedge \overline{E}^\ast_6\\
\nonumber
&+(-48\lambda_0-72\lambda_1+56\lambda_3-32\lambda_4+8\lambda_6)E^\ast_3\wedge E^\ast_6\wedge \overline{E}^\ast_3\wedge \overline{E}^\ast_6\\
\nonumber
&+(48\lambda_0-72\lambda_2-32\lambda_3+56\lambda_4-8\lambda_6)E^\ast_4\wedge E^\ast_6\wedge \overline{E}^\ast_4\wedge \overline{E}^\ast_6\\
\nonumber
&+(24i\lambda_1+24i\lambda_2-8i\lambda_3-8i\lambda_4)E^\ast_2\wedge E^\ast_3\wedge \overline{E}^\ast_5\wedge \overline{E}^\ast_6\\
\nonumber
&+(-24i\lambda_1-24i\lambda_2+8i\lambda_3+8i\lambda_4)E^\ast_5\wedge E^\ast_6\wedge \overline{E}^\ast_2\wedge \overline{E}^\ast_3\\
\nonumber
&+(-24\lambda_2-8\lambda_3+24\lambda_5+8\lambda_6)E^\ast_3\wedge E^\ast_4\wedge \overline{E}^\ast_1\wedge \overline{E}^\ast_2\\
\label{eqG2dc}
&+(-24\lambda_2-8\lambda_3+24\lambda_5+8\lambda_6)E^\ast_1\wedge E^\ast_2\wedge \overline{E}^\ast_3\wedge \overline{E}^\ast_4
\end{align}
\begin{theorem}
    \label{thmG2SKTmetrics}
    Let $\mathcal{J}:\mathfrak{g}_2\rightarrow \mathfrak{g}_2$ be the Samelson complex structure given by (\ref{eqG2Jone})-(\ref{eqG2Jfive}) and let $g$ be the symmetric bilinear form on $\mathfrak{g}_2$ whose nonzero components with respect to the basis $b_1,\dots, b_{14}$ are given by
    \begin{align}
        \label{eqSKTmetric1}
        &g(b_1,b_1)=g(b_{12},b_{12})=\frac{2a_1-3a_2+a_3}{12},\\
        \label{eqSKTmetric2}
        &g(b_1,b_{12})=-\frac{2a_1-3a_2+a_3}{24},~g(b_2,b_2)=\frac{2a_1+3a_2+a_3}{24},\\
        \label{eqSKTmetric3}
        &g(b_2,b_{14})=-\frac{-a_1+3a_2+a_3}{12},~g(b_3,b_3)=\frac{1}{2}a_2,~g(b_3,b_9)=-\frac{1}{4}a_2,\\
        \label{eqSKTmetric4}
        &g(b_4,b_4)=\frac{2a_1+3a_2+a_3}{24},~g(b_4,b_{13})=-\frac{-a_1+3a_2+a_3}{12}\\
        \label{eqSKTmetric5}
        &g(b_5,b_5)=\frac{4a_1-3a_2-a_3}{12},~g(b_5,b_{11})=-\frac{4a_1-3a_2-a_3}{24}\\
        \label{eqSKTmetric6}
        &g(b_6,b_6)=\frac{4a_1-3a_2-a_3}{12},~g(b_6,b_{10})=\frac{4a_1-3a_2-a_3}{24},\\
        \label{eqSKTmetric7}
        &g(b_7,b_7)=\frac{1}{2}a_2,~g(b_7,b_8)=\frac{1}{4}a_2,~g(b_8,b_8)=g(b_9,b_9)=\frac{1}{8}(a_2+a_3),\\
        \label{eqSKTmetric8}
        &g(b_{10},b_{10})=g(b_{11},b_{11})=\frac{2a_1+3a_2+a_3}{24},\\
        \label{eqSKTmetric9}
        &g(b_{13},b_{13})=g(b_{14},b_{14})=\frac{-a_1+3a_2+a_3}{6},
    \end{align}
    where the parameters $a_1,a_2,a_3$ satisfy the following inequalities
    \begin{equation}
        \label{eqParameterInequalities}
        0<a_2<a_1,\hspace*{0.2in}\gamma<a_3<4a_1-3a_2,
    \end{equation}
    with $\gamma:=\mbox{max}\{3a_2-2a_1,~a_1-3a_2,~0\}$.  Then 
    \begin{itemize}
        \item[(1)] $(g,\mathcal{J})$ determines a left-invariant SKT structure on $G_2$;
        \item[(2)] $g$ is invariant under the right action of the maximal torus $T$ whose Lie algebra is $\mathfrak{t}:=\mbox{span}_{\mathbb{R}}\{b_1,~b_{12}\}$; and
        \item[(3)] $g=-\lambda K$, (where $K$ is the Killing form and $\lambda>0$) if and only if
    $$
     a_1=96\lambda,~a_2=32\lambda,~a_3=96\lambda.
    $$
    \end{itemize}
    Conversely, if $g$ is any $\mathcal{J}$-Hermitian metric satisfying (1) and (2), then $g$ must be of the form given by (\ref{eqSKTmetric1})-(\ref{eqSKTmetric9}).
\end{theorem}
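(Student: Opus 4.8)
The plan is to prove the converse in two stages: first show that the two hypotheses in play here -- right $T$-invariance together with $\mathcal{J}$-Hermiticity -- already force $g$ to lie in the $7$-parameter family of Proposition \ref{propG2HermitianMetric}, and only then impose the SKT condition to cut this family down to the asserted $3$-parameter one.

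For the first stage I would begin from the observation that a left-invariant metric $g$ is invariant under the right action of $T$ if and only if the inner product $g_e$ on $\mathfrak{g}_2$ is $\mathrm{Ad}(T)$-invariant; since $T$ is connected, this is equivalent to the infinitesimal condition
$$
g([H,X],Y)+g(X,[H,Y])=0,\qquad \forall~H\in\mathfrak{t},~X,Y\in\mathfrak{g}_2.
$$
Extending $g$ $\mathbb{C}$-bilinearly to $(\mathfrak{g}_2)_{\mathbb{C}}$ and feeding in $X\in\mathfrak{g}_\mu$, $Y\in\mathfrak{g}_\nu$ from the root space decomposition (\ref{eqRootSpaceG2}), the condition becomes $(\mu(H)+\nu(H))\,g(X,Y)=0$ for all $H\in\mathfrak{t}$. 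Because the roots of (\ref{eqGalpha1})--(\ref{eqGalpha6}) are distinct and nonzero on $\mathfrak{t}$, the only weight pairs $(\mu,\nu)$ with $\mu+\nu=0$ are $(0,0)$ and $(\alpha_j,-\alpha_j)$; hence $g$ vanishes on every pair of summands except $\mathfrak{t}_{\mathbb{C}}\times\mathfrak{t}_{\mathbb{C}}$ and $\mathfrak{g}_{\alpha_j}\times\overline{\mathfrak{g}_{\alpha_j}}$. As each root space is one-dimensional, the latter contributes a single number $g(E_{\alpha_j},\overline{E}_{\alpha_j})$ for each $j$, which reality of $g$ forces to be a real scalar $\lambda_j$, necessarily positive by positive definiteness.

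It then remains to pin down the Cartan block, and here $\mathcal{J}$-Hermiticity enters. With $H_1,H_2$ as in (\ref{eqH1H2}) one has $\mathcal{J}H_1=H_2$ and $\mathcal{J}H_2=-H_1$, so $g(\mathcal{J}\cdot,\mathcal{J}\cdot)=g$ restricted to $\mathfrak{t}$ yields $g(H_1,H_1)=g(H_2,H_2)=:\lambda_0$ and $g(H_1,H_2)=0$; on the root blocks $\mathcal{J}$-Hermiticity is automatic, since $\mathcal{J}E_{\alpha_j}=iE_{\alpha_j}$ and $\mathcal{J}\overline{E}_{\alpha_j}=-i\overline{E}_{\alpha_j}$ leave $g(E_{\alpha_j},\overline{E}_{\alpha_j})$ fixed. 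Comparing with the definition (\ref{eqG2ghat}), this shows $g=\widehat{g}\big|_{\mathfrak{g}_2\times\mathfrak{g}_2}$ for some positive $\lambda_0,\dots,\lambda_6$, i.e. $g$ is exactly a member of the family of Proposition \ref{propG2HermitianMetric}, with components read off from Proposition \ref{propG2HermitianInner}. For the second stage I would then invoke Proposition \ref{propSKTdc}: the SKT hypothesis (1) is equivalent to $dc=0$, and $dc$ has already been computed as (\ref{eqG2dc}). Setting each of its coefficients to zero gives a homogeneous linear system in $\lambda_0,\dots,\lambda_6$ which, after discarding repeated rows, has a three-dimensional solution space. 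Taking $a_1:=\lambda_3$, $a_2:=\lambda_5$, $a_3:=\lambda_6$ as free parameters and solving for $\lambda_0,\lambda_1,\lambda_2,\lambda_4$, substitution back into Proposition \ref{propG2HermitianInner} reproduces (\ref{eqSKTmetric1})--(\ref{eqSKTmetric9}), while positivity of all seven $\lambda_j$ translates precisely into the inequalities (\ref{eqParameterInequalities}) (in particular $a_2<a_1$ is exactly the condition that the admissible range of $a_3$ be nonempty).

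The main obstacle is the first stage rather than the second: once the representation-theoretic normal form is established, the SKT step is a mechanical linear solve, essentially identical to the computation underlying the forward direction. The delicate point is to argue cleanly that right $T$-invariance annihilates all couplings between inequivalent weight spaces -- that no relation $\mu+\nu=0$ among the twelve roots and the zero weight survives beyond the obvious ones -- so that the space of admissible metrics is genuinely only $7$-dimensional before the pluriclosed condition is imposed.
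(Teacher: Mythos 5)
Your two-stage plan for the converse is precisely the paper's own route: stage 1 is the content of Proposition \ref{propSKTtotal} (right $T$-invariance plus $\mathcal{J}$-Hermiticity force the fundamental form into the shape (\ref{eqOmegaComplexDualBasis}), i.e.\ membership in the $7$-parameter family of Proposition \ref{propG2HermitianMetric}), and stage 2 is the paper's linear solve on the coefficients of (\ref{eqG2dc}), which yields (\ref{eqLambda0})--(\ref{eqLambda4}) together with the positivity inequalities. The only cosmetic difference is which hypothesis kills which block: the paper uses $\mathcal{J}$-invariance to annihilate the $g(E_{\alpha_j},E_{\alpha_k})$ and $g(\overline{E}_{\alpha_j},\overline{E}_{\alpha_k})$ pairings (a factor of $i^2=-1$), reserving $T$-invariance for the Cartan--root and $E_{\alpha_j}$--$\overline{E}_{\alpha_k}$ couplings, whereas you get everything outside the Cartan block from the single weight computation $(\mu(H)+\nu(H))\,g(X,Y)=0$; both are valid, and the "no accidental relation $\mu+\nu=0$" point you flag is immediate from the explicit root values (\ref{eqAlpha1H})--(\ref{eqAlpha6H}). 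Since your stage-2 solve is an equivalence, your argument also delivers part (1).

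However, as written the proposal does not prove the whole theorem. Part (3) --- $g=-\lambda K$ if and only if $a_1=96\lambda$, $a_2=32\lambda$, $a_3=96\lambda$ --- is nowhere addressed; it requires the separate computation of the Killing form on the basis (\ref{eqComplexBasis}) (the paper finds $K(H_1,H_1)=K(H_2,H_2)=-16$, $K(E_{\alpha_j},\overline{E}_{\alpha_j})=-32$ for $j=1,2,5$ and $-96$ for $j=3,4,6$) and then a matching of these values against (\ref{eqLambda0})--(\ref{eqLambda4}). Likewise, the forward direction of part (2) (every member of the $3$-parameter family \emph{is} right $T$-invariant) is only implicit in your write-up: it does follow from your stage-1 analysis, because the weight computation is an equivalence --- a metric vanishing on all pairs of weight spaces with $\mu+\nu\neq 0$ is infinitesimally $\mathrm{ad}(\mathfrak{t})$-invariant, hence $T$-invariant by connectedness of $T$ --- but you should state this reversal explicitly (the paper instead verifies (\ref{eqMaximalTorusb1})--(\ref{eqMaximalTorusb12}) directly on the basis $b_1,\dots,b_{14}$). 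Neither repair is difficult, but the proposal as it stands establishes only the converse and part (1).
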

\begin{proof}
    For (1), let $g(\cdot,\cdot):=\omega(\cdot,\mathcal{J}\cdot)$ where $\omega$ is given by (\ref{eqOmegaComplexDualBasis}) with $\lambda_0,\dots, \lambda_6$ all positive.  Then $g$ is precisely the $\mathcal{J}$-Hermitian metric of Proposition \ref{propG2HermitianMetric} which is parameterized by $\lambda_j>0$, $j=0,1,\dots, 6$. By Proposition \ref{propSKTdc}, $(g,\mathcal{J})$ is an SKT structure precisely if $dc=0$.  Setting all the coefficients of $dc$ in (\ref{eqG2dc}) to zero and solving the resulting linear system, one obtains the following general solution:
    \begin{align}
        \label{eqLambda0}
        \lambda_0 &= \frac{1}{6}\lambda_3-\frac{1}{4}\lambda_5+\frac{1}{12}\lambda_6\\
        \label{eqLambda1}
        \lambda_1 &=\frac{2}{3}\lambda_3-\frac{1}{2}\lambda_5-\frac{1}{6}\lambda_6\\
        \label{eqLambda2}
        \lambda_2 &= -\frac{1}{3}\lambda_3 +\lambda_5+\frac{1}{3}\lambda_6\\
        \label{eqLambda4}
        \lambda_4 &= \frac{3}{2}\lambda_5+\frac{1}{2}\lambda_6,
    \end{align}
    where $\lambda_3$, $\lambda_5$, and $\lambda_6$ are free.  However, for $g$ to be positive definite, we also require $\lambda_j>0$ for $j=0,1,\dots, 6$.  Consequently, some additional constraints must be placed on $\lambda_3$, $\lambda_5$, and $\lambda_6$.  With $\lambda_0$, $\lambda_1$, $\lambda_2$, and $\lambda_4$ given by (\ref{eqLambda0})-(\ref{eqLambda4}), one finds that $\lambda_j>0$ for $j=0,1,\dots, 6$ if and only if $\lambda_3$, $\lambda_5$, and $\lambda_6$ satisfy the following inequalities:
    \begin{align*}
        0<\lambda_5<\lambda_3,\hspace*{0.2in}\gamma<\lambda_6<4\lambda_3-3\lambda_5,
    \end{align*}
    where 
    $$
        \gamma:=\mbox{max}\{3\lambda_5-2\lambda_3,~\lambda_3-3\lambda_5,~0\}.
    $$
    Substituting $\lambda_3=a_1$, $\lambda_5=a_2$, $\lambda_6=a_3$, and (\ref{eqLambda0})-(\ref{eqLambda4}) into the component formulas for the metric in Proposition \ref{propG2HermitianInner} gives (\ref{eqSKTmetric1})-(\ref{eqSKTmetric9}).

    For (2), let $T$ be the maximal torus of $G_2$ whose Lie algebra is $\mathfrak{t}:=\mbox{span}_{\mathbb{R}}\{b_1,~b_{12}\}$.  To say that the left-invariant metric determined by $g$ is also invariant with respect to the right action by $T$ is equivalent to the statement that $g$ is $\mbox{ad}$-invariant with respect to $\mathfrak{t}$.  By direct calculation, one verifies that 
    \begin{align}
        \label{eqMaximalTorusb1}
        g([b_1,b_j],b_k)+g(b_j,[b_1,b_k])&=0\\
        \label{eqMaximalTorusb12}
        g([b_{12},b_j],b_k)+g(b_j,[b_{12},b_k])&=0
    \end{align}
    for all $1\le j,k\le 14$.
    
    For (3), consider the basis on $(\mathfrak{g}_2)_{\mathbb{C}}$ given by (\ref{eqComplexBasis}). It follows from (\ref{eqOmegaComplexDualBasis}) that 
    $$
    g(H_1,H_1) = g(H_2,H_2)= \lambda_0,\hspace*{0.2in}g(E_{\alpha_j},\overline{E}_{\alpha_j})=\lambda_j
    $$
    and all other pairings of the basis elements of (\ref{eqComplexBasis}) are zero.

    By direct calculation of the Killing form, one finds 
    $$
    K(H_1,H_1)=K(H_2,H_2)=-16,
    $$
    $$
    K(E_{\alpha_1},\overline{E}_{\alpha_1})=K(E_{\alpha_2},\overline{E}_{\alpha_2})=K(E_{\alpha_5},\overline{E}_{\alpha_5})=-32,
    $$
    $$
    K(E_{\alpha_3},\overline{E}_{\alpha_3})=K(E_{\alpha_4},\overline{E}_{\alpha_4})=K(E_{\alpha_6},\overline{E}_{\alpha_6})=-96,
    $$
    with all other pairings of the basis elements of (\ref{eqComplexBasis}) zero.  Hence, $g=-\lambda K$ for $\lambda>0$ if and only if
    \begin{equation}
        \label{eqLambdaKilling1}
        \lambda_0=16\lambda,\hspace*{0.1in}\lambda_1=\lambda_2=\lambda_5=32\lambda,~\lambda_3=\lambda_4=\lambda_6=96\lambda.
    \end{equation}
    Since the derivation of (\ref{eqSKTmetric1})-(\ref{eqSKTmetric9}) is obtained by substituting $\lambda_3=a_1$, $\lambda_5=a_2$, $\lambda_6=a_3$, and (\ref{eqLambda0})-(\ref{eqLambda4}) into the component formulas of the metric in Proposition \ref{propG2HermitianInner}, it follows that  we must have
    \begin{equation}
        \label{eqa1a2a3Killing}
        a_1=96\lambda,\hspace*{0.1in}a_2=32\lambda,\hspace*{0.1in}a_3=96\lambda.
    \end{equation}
    One now verifies that with the above values for $\lambda_3=a_1$, $\lambda_5=a_2$, and $\lambda_6=a_3$, the values of $\lambda_0$, $\lambda_1$, $\lambda_2$, and $\lambda_4$ given by (\ref{eqLambda0})-(\ref{eqLambda4}) coincide exactly with the corresponding values appearing in (\ref{eqLambdaKilling1}). This proves that the SKT metric given by (\ref{eqSKTmetric1})-(\ref{eqSKTmetric9}) satisfies $g=-\lambda K$ precisely when $a_1$, $a_2$, and $a_3$ are given by (\ref{eqa1a2a3Killing}).

    The very last statement follows from Proposition \ref{propSKTtotal} below.
\end{proof}

\begin{remark}
     As a practical matter, one should always check a formula against actual examples.  This is especially true when the derivation of that formula required lengthy calculations.  For this reason, the SKT metric of Theorem \ref{thmG2SKTmetrics} was checked against numerous examples.  Specifically, a program was written (in the language \textit{Octave}) that would randomly generate the three parameters $a_1,a_2,a_3$ subject to the inequalities given in Theorem \ref{thmG2SKTmetrics} and then test that the metric given by (\ref{eqSKTmetric1})-(\ref{eqSKTmetric9}) satisfied all the expected conditions.  In each example, the resulting metric $g$ was found to be positive definite, invariant under $\mathcal{J}$, and the pair $(g,\mathcal{J})$ satisfied $dc=0$, that is, $(g,\mathcal{J})$ determines a left-invariant SKT structure on $G_2$ (as expected).
\end{remark}
\begin{proposition}
    \label{propSKTtotal}
    Let $T$ be the maximal torus of $G_2$ whose Lie algebra is $\mathfrak{t}:=\mbox{span}_{\mathbb{R}}\{b_1,~b_{12}\}$.  If $h$ is a $\mathcal{J}$-Hermitian metric such that $h$ is invariant under the right action of $T$ and for which $(h,\mathcal{J})$ is SKT, then $h$ is a member of the 3-parameter family of left-invariant SKT metrics on $G_2$ given in Theorem \ref{thmG2SKTmetrics}.
\end{proposition}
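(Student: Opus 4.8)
The plan is to reduce the whole problem to the single inner product $g := h_e$ on $\mathfrak{g}_2$. This reduction is legitimate only because $h$ is left-invariant (as in the abstract and in condition (1) of Theorem \ref{thmG2SKTmetrics}), and this hypothesis is used essentially and cannot be dropped: for the (holomorphic) projection $\pi\colon G_2\to G_2/T$ onto the flag manifold and any non-invariant $f\in C^\infty(G_2/T)$, the form $\omega+i\partial\overline{\partial}(\pi^\ast f)$ is still right-$T$-invariant (as $\pi^\ast f$ is right-$T$-invariant and $r_t$ is holomorphic), $\mathcal{J}$-Hermitian for $f$ small, and SKT (since $\partial\overline{\partial}$ annihilates it), yet not left-invariant. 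Granting left-invariance, $h$ is determined by the $\mathcal{J}$-compatible inner product $g=h_e$; by Proposition \ref{propHermitianInner} this is the same datum as a positive Hermitian inner product on the holomorphic tangent space $T^{1,0}=\mathfrak{t}^{1,0}\oplus\bigoplus_{j=1}^6\mathfrak{g}_{\alpha_j}$, where $\mathfrak{t}^{1,0}=\mathrm{span}_{\mathbb{C}}\{H_1-iH_2\}$. At this stage this Hermitian form is an arbitrary $7\times 7$ positive Hermitian matrix and is \emph{not} assumed to be the diagonal ansatz (\ref{eqG2ghat}); avoiding that tacit assumption is the whole point.

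The first and main step is to show that right-$T$-invariance forces this Hermitian form into the seven-parameter shape of Proposition \ref{propG2HermitianInner}. For a left-invariant metric a direct computation shows that $r_t^\ast h=h$ is equivalent to $g(\mathrm{Ad}_t\,\cdot,\mathrm{Ad}_t\,\cdot)=g$ for all $t\in T$, i.e. to $\mathrm{Ad}_T$-invariance (equivalently $\mathrm{ad}_{\mathfrak{t}}$-invariance) of $g$; this is the one place left-invariance enters. Now $\mathrm{Ad}_{\exp H}$ acts on $\mathfrak{g}_{\alpha_j}$ by the scalar $e^{\alpha_j(H)}$ and trivially on $\mathfrak{t}^{1,0}$, so the $\mathrm{Ad}_T$-weights occurring on $T^{1,0}$ are $0$ (on $\mathfrak{t}^{1,0}$) and $\alpha_1,\dots,\alpha_6$ (on the $\mathfrak{g}_{\alpha_j}$). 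Invariance of the Hermitian form annihilates every pairing between two summands of distinct weight. Reading the weights off (\ref{eqGalpha1})--(\ref{eqGalpha6}), the six positive roots are pairwise distinct and nonzero, so all off-diagonal blocks vanish and the form collapses to exactly one positive parameter on each of $\mathfrak{t}^{1,0},\mathfrak{g}_{\alpha_1},\dots,\mathfrak{g}_{\alpha_6}$ — which is precisely the family (\ref{eqG2ghat}) with $\lambda_0,\dots,\lambda_6>0$.

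With the metric now in the seven-parameter form, the SKT condition is dispatched exactly as in the proof of Theorem \ref{thmG2SKTmetrics}(1): by Proposition \ref{propSKTdc}, $(h,\mathcal{J})$ is SKT if and only if $dc=0$, and the expression (\ref{eqG2dc}) for $dc$ in the variables $\lambda_0,\dots,\lambda_6$ forces the linear relations (\ref{eqLambda0})--(\ref{eqLambda4}), leaving $\lambda_3,\lambda_5,\lambda_6$ free subject to the positivity inequalities (\ref{eqParameterInequalities}). Setting $a_1=\lambda_3$, $a_2=\lambda_5$, $a_3=\lambda_6$ then writes $h$ in the form (\ref{eqSKTmetric1})--(\ref{eqSKTmetric9}), so $h$ lies in the three-parameter family of Theorem \ref{thmG2SKTmetrics}, as required.

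The main obstacle is the diagonalization in the second paragraph: one must be certain that no cross-term between distinct root spaces can survive $\mathrm{Ad}_T$-invariance, which rests entirely on the six roots $\alpha_1,\dots,\alpha_6$ being pairwise distinct and nonzero, and one must record explicitly that this entire Lie-algebra reduction is available only because $h$ is left-invariant — precisely the identification of right-$T$-invariance with $\mathrm{ad}_{\mathfrak{t}}$-invariance fails for a general right-$T$-invariant metric, as the $i\partial\overline{\partial}(\pi^\ast f)$ example shows. Once diagonality is established, the remaining SKT step is the routine linear algebra already carried out in deriving (\ref{eqG2dc}).
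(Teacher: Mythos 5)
Your proof is correct and follows essentially the same route as the paper's: both reduce right-$T$-invariance of the left-invariant metric to $\mathrm{ad}_{\mathfrak{t}}$-invariance of $h_e$ (you phrase it via the $\mathrm{Ad}_{\exp H}$-weights $e^{\alpha_j(H)}$, the paper via the infinitesimal identity $h([H,X],Y)+h(X,[H,Y])=0$), use the pairwise distinctness and nonvanishing of $\alpha_1,\dots,\alpha_6$ together with $\mathcal{J}$-compatibility to force the fundamental form into the seven-parameter diagonal shape (\ref{eqOmegaComplexDualBasis}), and then invoke the $dc=0$ analysis already carried out in Theorem \ref{thmG2SKTmetrics}. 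Your added remark that left-invariance is essential (via the right-$T$-invariant, SKT, non-left-invariant perturbation $\omega+i\partial\overline{\partial}(\pi^{\ast}f)$) is a worthwhile supplement the paper leaves implicit, but it does not alter the substance of the argument.
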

\begin{proof}
    From (\ref{eqGalpha1})-(\ref{eqGalpha6}), we have 
    \begin{align}
        \label{eqAlpha1H}
        &\alpha_1(H_1)=i,~\alpha_1(H_2)=-\sqrt{3}i,\\
        \label{eqAlpha2H}
        &\alpha_2(H_1)=i,~\alpha_2(H_2)=\sqrt{3}i,\\
        \label{eqAlpha3H}
        &\alpha_3(H_1)=i,~\alpha_3(H_2)=-\frac{1}{\sqrt{3}}i,\\
        \label{eqAlpha4H}
        &\alpha_4(H_1)=i,~\alpha_4(H_2)=\frac{1}{\sqrt{3}}i,\\
        \label{eqAlpha5H}
        &\alpha_5(H_1)=2i,~\alpha_5(H_2)=0,\\
        \label{eqAlpha6H}
        &\alpha_6(H_1)=0,~\alpha_6(H_2)=\frac{2}{\sqrt{3}}i.
    \end{align}
    Now let $h$ be a left-invariant $\mathcal{J}$-Hermitian metric which is invariant under the right action of $T$.  Since $\mathcal{J}H_1=H_2$, $\mathcal{J}$-invariance of $h$ implies 
    $$
    h(H_1,H_1)=h(H_2,H_2),\hspace*{0.1in}h(H_1,H_2)=0.
    $$
    Also for all $H\in \mathfrak{t}$, $X,Y\in (\mathfrak{g}_2)_{\mathbb{C}}$, invariance under the right action of $T$ implies 
    \begin{equation}
        \label{eqRightTh}
        h([H,X],Y)+h(X,[H,Y])=0.
    \end{equation}
    Consider the case where $X=H'\in \mathfrak{t}$ and $Y=E_{\alpha_j}$.  Then (\ref{eqRightTh}) reduces to
    $$
        \alpha_j(H)h(H',E_{\alpha_j})=0.
    $$
    Since $\alpha_j(H)\neq 0$ for some $H\in \mathfrak{t}$, it follows that 
    $$
    h(H',E_{\alpha_j})=0,\hspace*{0.1in}\forall~H'\in \mathfrak{t},~\forall~j=1,\dots, 6.
    $$
    Repeating the argument with $\overline{E}_{\alpha_j}$, one simply replaces $\alpha_j$ with $-\alpha_j$.  Consequently, one also finds
    $$
    h(H',\overline{E}_{\alpha_j})=0,\hspace*{0.1in}\forall~H'\in \mathfrak{t},~\forall~j=1,\dots, 6.
    $$
    By $\mathcal{J}$-invariance of $h$, we have
    $$
    h(E_{\alpha_j},E_{\alpha_k})=h(\mathcal{J}E_{\alpha_j}, \mathcal{J}E_{\alpha_k})=(i)^2h(E_{\alpha_j},E_{\alpha_k})=-h(E_{\alpha_j},E_{\alpha_k}).
    $$
    Hence,
    $$
        h(E_{\alpha_j},E_{\alpha_k})=0,\hspace*{0.1in}\forall~j,k
    $$
    Likewise, one also has
    $$
        h(\overline{E}_{\alpha_j},\overline{E}_{\alpha_k})=0,\hspace*{0.1in}\forall~j,k
    $$
    Lastly, consider the case where $X=E_{\alpha_j}$ and $Y=\overline{E}_{\alpha_k}$ in (\ref{eqRightTh}).  This gives
    $$
    \left(\alpha_j(H)-\alpha_k(H)\right)h(E_{\alpha_j},\overline{E}_{\alpha_k})=0.
    $$
    This implies
    $$
    h(E_{\alpha_j},\overline{E}_{\alpha_k})=0,\hspace*{0.1in}\forall~j\neq k. 
    $$
    Let $\mu_0:=h(H_1,H_1)=h(H_2,H_2)>0$ and $\mu_j:=h(E_{\alpha_j},\overline{E}_{\alpha_j})>0$ for $j=1,\dots,6$.  Then the fundamental form associated to $h$ is 
    \begin{equation}
    \label{eqOmegah}
    \omega_h=\mu_0H_1^\ast \wedge H_2^\ast + i\sum_{j=1}^6\mu_j E^\ast_j\wedge \overline{E}^\ast_j.
    \end{equation}
    Recall that the 7-parameter family of $\mathcal{J}$-Hermitian metrics in Proposition \ref{propG2HermitianMetric} consists of all $\mathcal{J}$-Hermitian metrics whose fundamental form is of the form (\ref{eqOmegaComplexDualBasis}). Comparing $\omega_h$ to (\ref{eqOmegaComplexDualBasis}), we see that $h$ is a member of the 7-parameter family of $\mathcal{J}$-Hermitian metrics given in Proposition \ref{propG2HermitianMetric}. 
    
    In the proof of Theorem \ref{thmG2SKTmetrics}, all SKT metrics were extracted from the 7-parameter family of $\mathcal{J}$-Hermitian metrics given in Proposition \ref{propG2HermitianMetric}.  This  resulted in a 3-parameter family of $\mathcal{J}$-Hermitian SKT metrics which were also invariant under the right action of $T$.  Consequently, if $h$ is also SKT, it must be a member of the 3-parameter family of SKT metrics in Theorem \ref{thmG2SKTmetrics}.
\end{proof}
    
\begin{remark}
    It would be interesting to study the curvature of the Bismut connections associated to the 3-parameter family of (left-invariant) SKT metrics on $G_2$ given in Theorem \ref{thmG2SKTmetrics}.  As this will most likely require a good bit of calculation, we leave this particular problem for future study.
\end{remark}
\appendix
\section{Nonzero bracket relations on $\mathfrak{g}_2$}
\label{AppendixG2Bracket}
Let $b_1,b_2,\dots, b_{14}$ be the basis of $\mathfrak{g}_2$ given by (\ref{eqG2basis1})-(\ref{eqG2basis5}).  The nonzero bracket relations are
$$
[b_1,b_2]=-b_4,~[b_1,b_3]=-2b_7,~[b_1,b_4]=b_2,
$$
$$
[b_1,b_5]=-b_6,~[b_1,b_6]=b_5,~[b_1,b_7]=2b_3,
$$
$$
[b_1,b_8]=b_3,~[b_1,b_9]=b_7,~[b_1,b_{10}]=b_5+b_{11},
$$
$$
[b_1,b_{11}]=b_6-b_{10},~[b_1,b_{13}]=b_{14},~[b_1,b_{14}]=-b_{13},
$$
$$
[b_2,b_3]=-b_5-b_{11},~[b_2,b_4]=-b_1,~[b_2,b_5]=-b_9,
$$
$$
[b_2,b_6]=b_8,~[b_2,b_7]=-b_{10},~[b_2,b_8]=-2b_{10},
$$
$$
[b_2,b_9]=b_5,~[b_2,b_{10}]=2b_8,~[b_2,b_{11}]=b_3+b_9,~
$$
$$
[b_2,b_{12}]=b_{13},~[b_2,b_{13}]=-b_{12},~[b_3,b_4]=-b_{10},
$$
$$
[b_3,b_5]=b_{14},~[b_3,b_6]=-b_{13},~[b_3,b_7]=-2b_1,
$$
$$
[b_3,b_8]=-b_1,~[b_3,b_{10}]=b_4,~[b_3,b_{11}]=-b_2-b_{14},
$$
$$
[b_3,b_{12}]=-b_7,~[b_3,b_{13}]=b_6,~[b_3,b_{14}]=-b_5,
$$
$$
[b_4,b_5]=b_7-b_8,~[b_4,b_6]=-b_3-b_9,~[b_4,b_7]=-b_5-b_{11},
$$
$$
[b_4,b_8]=b_5+b_{11},~[b_4,b_9]=b_6-b_{10},~[b_4,b_{10}]=-b_3,
$$
$$
[b_4,b_{11}]=b_7-b_8,~[b_4,b_{12}]=-b_{14},~[b_4,b_{14}]=b_{12},
$$
$$
[b_5,b_6]=-2b_1-2b_{12},~[b_5,b_7]=-b_{13},~[b_5,b_8]=-b_4-b_{13},
$$
$$
[b_5,b_9]=-b_2,~[b_5,b_{10}]=-b_1-b_{12},~[b_5,b_{12}]=b_6,
$$
$$
[b_5,b_{13}]=b_7,~[b_5,b_{14}]=b_3,~[b_6,b_7]=-b_{14},
$$
$$
[b_6,b_8]=b_2,~[b_6,b_9]=-b_4-b_{13},~[b_6,b_{11}]=-b_1-b_{12},
$$
$$
[b_6,b_{12}]=-b_5,~[b_6,b_{13}]=-b_3,~[b_6,b_{14}]=b_7,
$$
$$
[b_7,b_9]=-b_1,~[b_7,b_{10}]=-b_2,~[b_7,b_{11}]=-b_4-b_{13},
$$
$$
[b_7,b_{12}]=b_3,~[b_7,b_{13}]=-b_5,~[b_7,b_{14}]=-b_6,
$$
$$
[b_8,b_9]=-b_1-b_{12},~[b_8,b_{10}]=-2b_2,~[b_8,b_{11}]=-b_{13},
$$
$$
[b_8,b_{12}]=b_3+b_9,~[b_8,b_{13}]=b_{11},~[b_8,b_{14}]=-b_{10},
$$
$$
[b_9,b_{10}]=-b_4-b_{13},~[b_9,b_{11}]=-b_2-b_{14},~[b_9,b_{12}]=b_7-b_8,
$$
$$
[b_9,b_{13}]=-b_6+b_{10},~[b_9,b_{14}]=b_5+b_{11},~[b_{10},b_{11}]=-b_{12},
$$
$$
[b_{10},b_{12}]=b_{11},~[b_{10},b_{13}]=-b_3-b_9,~[b_{10},b_{14}]=b_8,
$$
$$
[b_{11},b_{12}]=-b_{10},~[b_{11},b_{13}]=-b_8,~[b_{11},b_{14}]=-b_3-b_9,
$$
$$
[b_{12},b_{13}]=-2b_{14},~[b_{12},b_{14}]=2b_{13},~[b_{13},b_{14}]=-2b_{12}.
$$

\section{Nonzero bracket relations on $(\mathfrak{g}_2)_\mathbb{C}$}
\label{AppendixG2Complex}
Let $E_{\alpha_j}$ for $j=1,\dots, 6$ be defined by (\ref{eqEalpha1})-(\ref{eqEalpha6}) and let   
$$
H_1:=b_1,~\hspace*{0.1in}H_2:=-\frac{1}{\sqrt{3}}b_1-\frac{2}{\sqrt{3}}b_{12}.
$$
The nonzero bracket relations on $(\mathfrak{g}_2)_{\mathbb{C}}$ with respect to the basis
$$
H_1,~H_2, E_{\alpha_1},\dots, E_{\alpha_6},~\overline{E}_{\alpha_1},\dots, \overline{E}_{\alpha_6}
$$
are given as follows:  
$$
[H_1,E_{\alpha_1}]=iE_{\alpha_1},~[H_1,E_{\alpha_2}]=iE_{\alpha_2},~[H_1,E_{\alpha_3}]=iE_{\alpha_3},~[H_1,E_{\alpha_4}]=iE_{\alpha_4},
$$
$$
[H_1,E_{\alpha_5}]=2iE_{\alpha_5},~[H_1,\overline{E}_{\alpha_1}]=-i\overline{E}_{\alpha_1},~[H_1,\overline{E}_{\alpha_2}]=-i\overline{E}_{\alpha_2}
$$
$$
[H_1,\overline{E}_{\alpha_3}]=-i\overline{E}_{\alpha_3},~[H_1,\overline{E}_{\alpha_4}]=-i\overline{E}_{\alpha_4},~[H_1,\overline{E}_{\alpha_5}]=-2i\overline{E}_{\alpha_5},
$$
$$
[H_{2},E_{\alpha_1}]=-i\sqrt{3}E_{\alpha_1},~[H_{2},E_{\alpha_2}]=i\sqrt{3}E_{\alpha_2},~[H_2,E_{\alpha_3}]=-\frac{i}{\sqrt{3}}E_{\alpha_3},
$$
$$
[H_{2},E_{\alpha_4}]=\frac{i}{\sqrt{3}}E_{\alpha_4},~[H_{2},E_{\alpha_6}]=\frac{2i}{\sqrt{3}}E_{\alpha_6},~[H_2,\overline{E}_{\alpha_1}]=i\sqrt{3}\overline{E}_{\alpha_1},
$$
$$
[H_{2},\overline{E}_{\alpha_2}]=-i\sqrt{3}\overline{E}_{\alpha_2},~[H_2,\overline{E}_{\alpha_3}]=\frac{i}{\sqrt{3}}\overline{E}_{\alpha_3},~[H_{2},\overline{E}_{\alpha_4}]=-\frac{i}{\sqrt{3}}\overline{E}_{\alpha_4},
$$
$$
[H_{2},\overline{E}_{\alpha_6}]=-\frac{2i}{\sqrt{3}}\overline{E}_{\alpha_6},~[E_{\alpha_1},E_{\alpha_2}]=2E_{\alpha_5},~[E_{\alpha_1},E_{\alpha_6}]=-2iE_{\alpha_3},
$$
$$
[E_{\alpha_1},\overline{E}_{\alpha_1}]=2iH_1-2i\sqrt{3}H_2,~[E_{\alpha_1},\overline{E}_{\alpha_3}]=2i\overline{E}_{\alpha_6},
$$
$$
[E_{\alpha_1},\overline{E}_{\alpha_5}]=-2\overline{E}_{\alpha_2},~[E_{\alpha_2},\overline{E}_{\alpha_2}]=2iH_1+2i\sqrt{3}H_2,~[E_{\alpha_2},\overline{E}_{\alpha_4}]=2iE_{\alpha_6},
$$
$$
[E_{\alpha_2},\overline{E}_{\alpha_5}]=2\overline{E}_{\alpha_1},~[E_{\alpha_2},\overline{E}_{\alpha_6}]=-2iE_{\alpha_4},~[E_{\alpha_3},E_{\alpha_4}]=-6E_{\alpha_5},
$$
$$
[E_{\alpha_3},E_{\alpha_6}]=4iE_{\alpha_4},~[E_{\alpha_3},\overline{E}_{\alpha_1}]=2iE_{\alpha_6},
$$
$$
[E_{\alpha_3},\overline{E}_{\alpha_3}]=6iH_1-2i\sqrt{3}H_2,~[E_{\alpha_3},\overline{E}_{\alpha_4}]=-4i\overline{E}_{\alpha_6},
$$
$$
[E_{\alpha_3},\overline{E}_{\alpha_5}]=2\overline{E}_{\alpha_4},~[E_{\alpha_3},\overline{E}_{\alpha_6}]=-6iE_{\alpha_1},~[E_{\alpha_4},E_{\alpha_6}]=-6iE_{\alpha_2},
$$
$$
[E_{\alpha_4},\overline{E}_{\alpha_2}]=2i\overline{E}_{\alpha_6},~[E_{\alpha_4},~\overline{E}_{\alpha_3}]=-4iE_{\alpha_6}
$$
$$
[E_{\alpha_4},\overline{E}_{\alpha_4}]=6iH_1+2i\sqrt{3}H_2,~[E_{\alpha_4},\overline{E}_{\alpha_5}]=-2\overline{E}_{\alpha_3},
$$
$$
[E_{\alpha_4},\overline{E}_{\alpha_6}]=4iE_{\alpha_3},~[E_{\alpha_5},\overline{E}_{\alpha_1}]=2E_{\alpha_2},~[E_{\alpha_5},\overline{E}_{\alpha_2}]=-2E_{\alpha_1},
$$
$$
[E_{\alpha_5},\overline{E}_{\alpha_3}]=-2E_{\alpha_4},~[E_{\alpha_5},\overline{E}_{\alpha_4}]=2E_{\alpha_3},~[E_{\alpha_5},\overline{E}_{\alpha_5}]=4iH_1,
$$
$$
[E_{\alpha_6},\overline{E}_{\alpha_2}]=-2i\overline{E}_{\alpha_4},~[E_{\alpha_6},\overline{E}_{\alpha_3}]=-6i\overline{E}_{\alpha_1},~[E_{\alpha_6},\overline{E}_{\alpha_4}]=4i\overline{E}_{\alpha_3},~
$$
$$
[E_{\alpha_6},\overline{E}_{\alpha_6}]=4i\sqrt{3}H_2,~[\overline{E}_{\alpha_1},\overline{E}_{\alpha_2}]=2\overline{E}_{\alpha_5},~[\overline{E}_{\alpha_1},\overline{E}_{\alpha_6}]=2i\overline{E}_{\alpha_3},
$$
$$
[\overline{E}_{\alpha_3},\overline{E}_{\alpha_4}]=-6\overline{E}_{\alpha_5},~[\overline{E}_{\alpha_3},\overline{E}_{\alpha_6}]=-4i\overline{E}_{\alpha_4},~[\overline{E}_{\alpha_4},\overline{E}_{\alpha_6}]=6i\overline{E}_{\alpha_2}.
$$

\end{document}